\documentclass[12pt, reqno]{amsart}

\usepackage{amsmath}
\usepackage{amsthm}
\usepackage{amsfonts}
\usepackage{amssymb}
\usepackage{verbatim}
\usepackage{graphicx}
\usepackage{mathtools}
\usepackage[margin=1.0in]{geometry}
\usepackage{url}
\usepackage{enumerate}
\usepackage[pdftex,bookmarks=true]{hyperref}
\usepackage[normalem]{ulem}
\usepackage[usenames, dvipsnames]{color}

\theoremstyle{plain}
\newtheorem{theorem}{Theorem}[section]
\newtheorem{conjecture}[theorem]{Conjecture}

\newtheorem{lemma}[theorem]{Lemma}
\newtheorem{corollary}[theorem]{Corollary}

\newtheorem{question}[theorem]{Question}

\theoremstyle{definition}

\newcommand\C{{\mathbb{C}}}
\newcommand\N{{\mathbb{N}}}

\newcommand\R{{\mathbb{R}}}
\renewcommand\P{{\mathbb{P}}}
\newcommand\E{{\mathbb{E}}}

\newcommand{\ep}{\varepsilon}

\newcommand{\ba}{\[\begin{aligned}}
	\newcommand{\ea}{\end{aligned}\]}

\setcounter{tocdepth}{3}

\let\oldtocsection=\tocsection

\let\oldtocsubsection=\tocsubsection

\let\oldtocsubsubsection=\tocsubsubsection

\renewcommand{\tocsection}[2]{\hspace{0em}\oldtocsection{#1}{#2}}
\renewcommand{\tocsubsection}[2]{\hspace{1em}\oldtocsubsection{#1}{#2}}
\renewcommand{\tocsubsubsection}[2]{\hspace{2em}\oldtocsubsubsection{#1}{#2}}

\setlength{\parskip}{1ex}      
\setlength{\parindent}{0pt}    

\title{On the First Non-Universal Term in Random Polynomial Real Zeros}
\author{Phuc Lam and Oanh Nguyen}
\address{
Department of Applied Mathematics \\
Brown University \\
Providence, RI 02906 \\
USA \\}
\email{phuc\_lam@brown.edu}
\address{
Department of Applied Mathematics \\
Brown University \\
Providence, RI 02906 \\
USA \\}
\email{oanh\_nguyen1@brown.edu}
\thanks{}
\keywords{}
\subjclass[2010]{}
\date{}

\begin{document}
 
\maketitle

\begin{abstract}
    Let $P_n(x) = \sum_{k=0}^{n} \xi_k x^k$ be a Kac random polynomial, where the coefficients $\xi_k$ are i.i.d.\ copies of a given random variable $\xi$. Based on numerical experiments, it has been conjectured that if $\xi$ has mean zero, unit variance, and a finite $(2+\varepsilon_0)$--moment for some $\varepsilon_0>0$, then
    \[
    \mathbb{E}[N_{\mathbb{R}}(P_n)] \;=\; \frac{2}{\pi} \log n + C_{\xi} + o_n(1),
    \]
    where $N_{\mathbb{R}}(P_n)$ denotes the number of real roots of $P_n$, and $C_{\xi}$ is an absolute constant depending only on $\xi$, which is nonuniversal. Prior to this work, the existence of $C_{\xi}$ had only been established by Do--Nguyen--Vu (2015, \emph{Proc.\ Lond.\ Math.\ Soc.}) under the additional assumption that $\xi$ either admits a $(1+p)$--integrable density or is uniformly distributed on $\{\pm 1, \pm 2, \dots, \pm N\}$. In this paper, using a different method, we remove these extra conditions on $\xi$, and extend the result to the setting where the $\xi_k$ are independent but not necessarily identically distributed. Moreover, this proof strategy provides an alternative description of the constant $C_{\xi}$, 
    and this new perspective serves as the key ingredient in establishing that $C_{\xi}$ 
    depends continuously on the distribution of $\xi$.
    \end{abstract}

\section{Introduction}
Let $\xi$ be a random variable with zero mean and unit variance. Consider the Kac polynomial
\begin{equation*}
    P_n(x) := \sum_{k = 0}^{n} \xi_k x^k,
\end{equation*}
where $\xi_k$ are i.i.d. copies of $\xi$. For any $D \subseteq \C$, let $N_{P_n}(D)$ be the number of roots of $P_n$ (counting multiplicity) in $D$. 

The Kac polynomial is a classical model of random polynomials and has received renewed interest in the last two decades; see, for example, the very incomplete list \cite{BharuchaSambandham, Farahmand1996, SheppVanderbei1995, ibragimov1997roots, shiffman2003equilibrium, TaoVu2010, kabluchko2013roots, ORourke2019pairing, michelen2021real, OanhNguyenVu2022, nguyen2024hole, michelen2024central, michelen2025limit} and the references therein. The study of real zeros of Kac polynomials dates back to Bloch and Polya \cite{BP} who studied the case when $\xi$ is uniformly distributed in $\{-1,0,1\}$ and established the upper bound
$\mathbf{E} N_n = O(n^{1/2}).$
In a series of breakthrough papers \cite{LO1,LO2,LO3}
in the early 1940s, Littlewood and Offord proved (for many atom variables
$\xi$ such as Gaussian, Rademacher (uniform on $\{-1, 1\}$), or uniform on $[-1,1]$) that the number of real roots is bounded above and below by a polylogarithmic function of $n$ with high probability.

The first precise asymptotic was obtained in Kac’s seminal papers \cite{Kac1943average}, which introduced the now well-known Kac–Rice formula
\begin{equation}  
	\E N_n = \int_{-\infty}^{\infty} dt \int_{-\infty}^{\infty} |y| \, p(t,0,y) \, dy,\notag
\end{equation}
where $p(t,x,y)$ is the joint probability density for $(P_n(t), P_n'(t)) = (x, y)$. For Gaussian coefficients, this formula can be evaluated explicitly to yield
\begin{equation}  
	\E N_n = \frac{1}{\pi} \int_{-\infty}^{\infty}
	\sqrt{ \frac{1}{(t^2 - 1)^2} + \frac{(n+1)^2 t^{2n}}{(t^{2n+2} - 1)^2} } \, dt
	= \left( \frac{2}{\pi} + o(1) \right) \log n.\notag
\end{equation} 
 
Evaluating the Kac-Rice formula for non-Gaussian coefficients is notoriously difficult as it involves integrating the joint density of $P_n$ and $P_n'$. For discrete distributions, Erd\H{o}s and Offord~\cite{EO} in 1956 developed a completely different method -- combining combinatorial arguments with a connection between real roots and sign changes -- to handle the Rademacher case in which $\xi_i$ takes values $\pm 1$ with equal probability. They proved that with probability $1 - o\!\left(\frac{1}{\sqrt{\log\log n}}\right)$,
\begin{equation}  
	N_{n,\xi} = \frac{2}{\pi} \log n + o\!\left( \log^{2/3} n \, \log\log n \right).\notag
\end{equation}

In the late 1960s and early 1970s, Ibragimov and Maslova \cite{Ibragimov1968average, Ibragimov1971average} extended this method and proved that $\E N_{P_n}(\R)=(1+o(1))\frac{2}{\pi}\log n$ is universal if $\xi$ has mean zero and belongs to the domain of attraction of the normal law.

In the last two decades, the method of universality was developed in random matrix theory and subsequently adopted for random polynomials with the pioneering work by Tao and Vu \cite{TVpoly}. Developing this method, Do, Vu, and the second author \cite{DoONguyenVu2018} showed that for any $\xi$ with mean zero, variance one, and a finite $(2+\varepsilon_0)$--moment for some $\varepsilon_0>0$, the error term is indeed of order $O(1)$:$$\E N_{P_n}(\R)=\frac{2}{\pi}\log n+O(1).$$

While the leading-order behavior of the expected number of real roots is now known to be universal, it is natural to ask whether the finer structure continues to reflect the law of the coefficients. 
Numerical experiments such as those shown in Figure \ref{fig1} below suggest the following conjecture.
 \begin{conjecture}\label{conj} If the random variable $\xi$ has mean 0, variance 1, and a bounded $(2+\ep_0)$-moment for some $\ep_0>0$, then there exists a constant $C_{\xi}$ depending only on $\xi$ such that
 	\begin{equation}
 		\E N_{P_n}(\R) = \dfrac{2}{\pi}\log n + C_{\xi} + o(1) \text{ as $n\to\infty$.  }\notag
 	\end{equation}
Moreover, the constant $C_{\xi}$ is non-universal. For instance, $C_{\xi}$ may be different from $C_{\mathrm{Gau}}$ if $\xi$ is non-Gaussian.
\end{conjecture}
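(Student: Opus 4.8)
The plan is to split the real zeros of $P_n$ into a few scales, to show that every scale except one reproduces the universal term $\tfrac{2}{\pi}\log n$ up to a computable \emph{universal} constant, and to isolate the remaining scale as the source of a convergent, non-universal correction; the same isolation will also yield an intrinsic description of $C_\xi$ and its continuity. First I would reduce: the substitution $x\mapsto 1/x$ (equivalently, reversing the coefficient sequence, which is again admissible) identifies the zero count on $\{|x|>1\}$ with the one on $(0,1)$, and $x\mapsto -x$ replaces $(\xi_k)$ by $((-1)^k\xi_k)$ -- still independent, mean $0$, variance $1$, with uniformly bounded $(2+\ep_0)$-moments -- and sends $(-1,0)$ to $(0,1)$; this is exactly why the independent, not necessarily identically distributed setting is the natural one, since $(-1)^k\xi_k$ need not be i.i.d.\ even if $\xi_k$ is. The points $x=\pm1$ contribute $o(1)$ by the Kolmogorov--Rogozin anticoncentration inequality, and the possible zero at $x=0$ (only if $\xi$ has an atom at $0$) is absorbed into a finite intrinsic constant. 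Thus it suffices to prove $\E N_{P_n}((0,1))=\tfrac1{2\pi}\log n+\lambda(\xi)+o_n(1)$ for every admissible coefficient sequence and to track $\lambda(\xi)$. Fix a small $\delta>0$ and a large $A$, write $x=1-u$, and partition $(0,1)$ into the \emph{bulk} $x\in(0,1-\delta]$, the \emph{neck} $u\in[A/n,\delta]$, and the \emph{edge} $u\in(0,A/n)$, sending $\delta\to0$ and $A\to\infty$ only at the end.

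\emph{The bulk produces the non-universal constant.} Because the $\xi_k$ are the same for all $n$, $P_n\to P_\infty:=\sum_{k\ge 0}\xi_k x^k$ almost surely and locally uniformly on the open unit disk (the coefficients are bounded below in modulus infinitely often and are $O(e^{\eps k})$ for all $\eps$, so the radius of convergence is $1$ a.s.). For a suitable $\delta$ (a.e.\ $\delta$ works) $P_\infty(1-\delta)\neq 0$ a.s., so Hurwitz's theorem gives $N_{P_n}((0,1-\delta])\to N_{P_\infty}((0,1-\delta])$ a.s.\ with multiplicity; a uniform-integrability bound for $N_{P_n}((0,1-\delta])$, obtained from Jensen's formula together with anticoncentration as in \cite{DoONguyenVu2018}, upgrades this to $\E N_{P_n}((0,1-\delta])\to\E N_{P_\infty}((0,1-\delta])$. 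Near $x=1$ the series $P_\infty$ is itself governed by a central limit theorem -- at $x=1-u$ it is a sum of $\asymp 1/u$ delocalized independent terms -- so its expected real-zero density matches that of the Gaussian power series, namely $\tfrac1{2\pi(1-x)}(1+O((1-x)^{\beta}))$ as $x\to 1^-$, with the error uniform over the class $\{\E|\xi|^{2+\ep_0}\le M\}$. Hence
\[
\E N_{P_\infty}((0,1-\delta])=\tfrac1{2\pi}\log\tfrac1\delta+c(\xi)+O(\delta^{\beta}),\qquad c(\xi):=\lim_{\delta\to 0}\Big(\E N_{P_\infty}((0,1-\delta])-\tfrac1{2\pi}\log\tfrac1\delta\Big),
\]
and $c(\xi)$ is genuinely non-universal: it depends on the joint law of the low-order coefficients (for instance on whether $P_\infty$ acquires a zero near the origin), which no normalization removes.

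\emph{The neck and edge produce the universal logarithm.} For $x$ within $\delta$ of $1$ both $P_n(x)$ and $P_n'(x)$ are sums of many delocalized independent terms, so a Lindeberg/Tao--Vu replacement -- swapping each $\xi_k$ for a Gaussian $g_k$ with the same first two moments, one coefficient at a time, and controlling the change in a suitably smoothed real-zero count on $(1-\delta,1)$ by two-dimensional anticoncentration for $(P_n(x),P_n'(x))$ -- incurs a per-scale error $O(u^{\beta})$ at $u\asymp 2^{-j}$, which sums over $2^{-j}\in[1/n,\delta]$ to a total $O(\delta^{\beta})+O(n^{-\beta})$, uniformly in the moment class. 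For the Gaussian model one then applies the Kac--Rice formula directly, as in the evaluation recalled in the introduction: the real-zero density at $x=1-u$ equals $\tfrac1{2\pi u}(1+o(1))$ throughout $1/n\ll u\ll 1$, while after the rescaling $u=s/n$ the edge converges, together with its derivative, to the Gaussian process $\zeta(s)=\int_0^1 e^{-st}\,dW(t)$, whose zero count satisfies $\E N_\zeta((0,A))=\tfrac1{2\pi}\log A+\kappa+o_A(1)$ for a universal constant $\kappa$. Thus the neck and edge together contribute $\tfrac1{2\pi}\log(n\delta)+\kappa+o_n(1)+o_\delta(1)+o_A(1)$. This step is the main obstacle: for a general $\xi$ with no density and only a $(2+\ep_0)$-moment the honest real-zero count is not a smooth functional of the coefficients, so the smoothed proxy and its Lindeberg swap must be arranged so that the errors survive summation over \emph{all} dyadic scales down to $1/n$ and the subsequent $\delta\to0$ limit, which demands sharp anticoncentration for $(P_n,P_n')$ that is uniform across the neck and edge under the minimal moment hypothesis; matching the $P_\infty$-description of the bulk to the Gaussian-process description of the edge at the seam $x=1-\delta$ is a further delicate point.

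\emph{Assembly, continuity, non-universality.} Adding the bulk and neck--edge values gives $\E N_{P_n}((0,1))=\tfrac1{2\pi}\log n+c(\xi)+\kappa+o_n(1)+O(\delta^{\beta})+o_A(1)$; sending $n\to\infty$, then $\delta\to0$, then $A\to\infty$ yields $\lambda(\xi)=c(\xi)+\kappa$, and reassembling the four pieces (two with coefficients $\xi_k$, two with $(-1)^k\xi_k$) gives the conjectured expansion with
\[
C_\xi \;=\; 4\kappa \;+\; 2\lim_{\delta\to 0}\Big(\E N_{P_\infty}((-1+\delta,1-\delta))-\tfrac1\pi\log\tfrac1\delta\Big)
\]
(when $\P(\xi=0)=0$; otherwise a finite intrinsic correction for the zero at the origin is added), where $P_\infty=\sum_{k\ge0}\xi_k x^k$. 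Since $C_\xi$ depends on $\xi$ only through the random analytic function $P_\infty$, continuity follows: if $\xi^{(m)}\to\xi$ weakly with $\sup_m\E|\xi^{(m)}|^{2+\ep_0}<\infty$, then the coefficients of $P_\infty^{(m)}$ converge jointly in law and the moment bound forces tightness in the topology of locally uniform convergence on the disk, so $P_\infty^{(m)}\to P_\infty$ in law; hence $\E N_{P_\infty^{(m)}}((-1+\delta,1-\delta))\to\E N_{P_\infty}((-1+\delta,1-\delta))$ for a.e.\ fixed $\delta$ (Hurwitz plus the same uniform-integrability bound), and the $\delta\to0$ limit is uniform in $m$ by the uniform $O(\delta^{\beta})$ estimate of the bulk step, so $C_{\xi^{(m)}}\to C_\xi$. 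Non-universality is then witnessed, using this $P_\infty$-description together with an explicit computation, by any admissible $\xi$ whose regularized bulk zero count of $P_\infty$ differs from the Gaussian one -- for instance a suitably rescaled law on $\{-a,0,a\}$, whose atom at the origin contributes zeros that the Gaussian power series never has -- so that $C_\xi\neq C_{\mathrm{Gau}}$; an $o(1)$-precise separation of this kind only becomes accessible through the present description.
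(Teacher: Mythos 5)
Your architecture is recognizably the same as the paper's (reduce to $(0,1]$ by $x\mapsto -x$ and coefficient reversal, Gaussian comparison plus Kac--Rice near $1$, and a non-universal constant extracted from $\E N_{P_n}\to \E N_{P_\infty}$ on the bulk, which also drives continuity), but it has genuine gaps at precisely the two hardest points. First, in the bulk you pass from locally uniform convergence $P_n\to P_\infty$ to $N_{P_n}((0,1-\delta])\to N_{P_\infty}((0,1-\delta])$ ``by Hurwitz'' plus nonvanishing at the endpoint. Hurwitz controls complex zeros; for \emph{real} zero counts you must additionally rule out multiple real zeros of $P_\infty$: at a double real zero of $P_\infty$ the two nearby zeros of $P_n$ may sit off the axis as a conjugate pair, and the real count fails to converge. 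That $P_\infty$ almost surely has no double roots is a deep recent result of Michelen--Yakir, which the paper invokes explicitly (Lemma \ref{lm:dbr}) and emphasizes as the key obstruction; your proposal does not address it at all. Second, your neck/edge analysis rests on unproven quantitative claims: a real-zero density $\frac{1}{2\pi(1-x)}\bigl(1+O((1-x)^{\beta})\bigr)$ for the non-Gaussian series $P_\infty$, uniform over the moment class; a Lindeberg swap for a smoothed zero count with per-scale error $O(u^{\beta})$ summable over all dyadic scales down to $1/n$ under only a $(2+\ep_0)$-moment; and an edge scaling limit with a universal constant $\kappa$. You yourself flag this as ``the main obstacle,'' and nothing in the proposal closes it. The paper avoids all of this by citing the established universality comparison on $[1-1/C_0,1]$ (Theorem \ref{thm:universality}) and evaluating only the Gaussian Kac--Rice integral; it never needs density asymptotics for $P_\infty$, power-saving rates, or an edge process, and its constant is defined through the limit $\lim_{C\to\infty}[\E N_{P_\infty}((0,1-1/C))-\frac{1}{2\pi}\log C]$ rather than through a density expansion whose existence you assume.

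Separately, the statement you chose is the conjecture, including non-universality, and your final step does not deliver that part. Declaring it ``witnessed by any admissible $\xi$ whose regularized bulk zero count of $P_\infty$ differs from the Gaussian one'' is circular, and for a fixed law on $\{-a,0,a\}$ the positive origin contribution $p/(1-p)$ does not by itself give $C_\xi\neq C_{\mathrm{Gau}}$, since the remaining (unknown) terms could cancel it. The paper only obtains a weak form of non-universality (Lemma \ref{lm:nonUniversality}) by sending $p\to 1$ so that $p/(1-p)$ dominates every uniformly controlled contribution; for atomless non-Gaussian $\xi$ (e.g.\ Rademacher) the separation $C_\xi\neq C_{\mathrm{Gau}}$ is left open there, and your sketch does not go beyond that either.
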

\begin{figure}[h!] \label{fig1}
	\includegraphics[width=.7\textwidth]{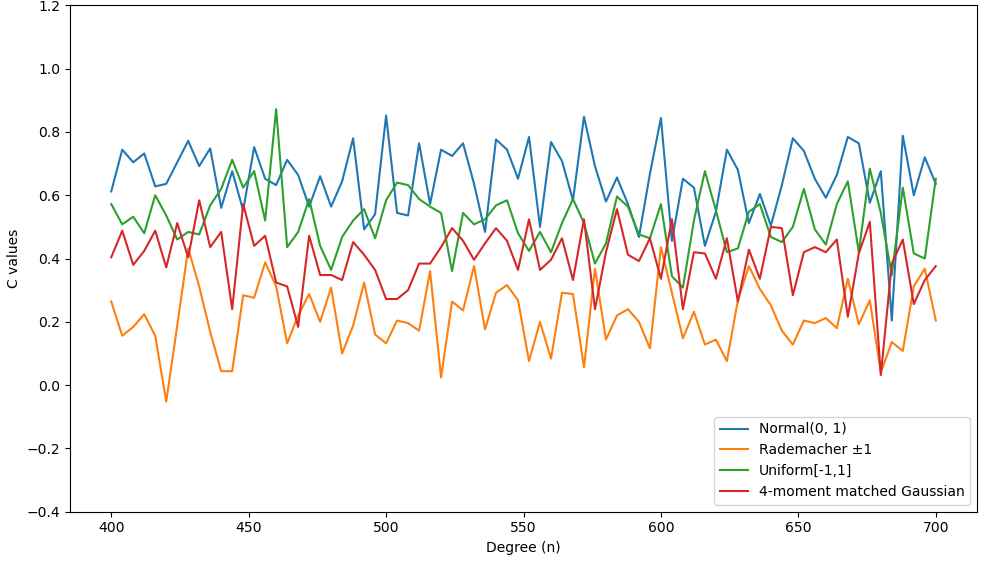} 
	\caption{Sampled value of $C_{\xi}$ for 4 different coefficients distributions: standard Gaussian, uniform in $[-1, 1]$, Rademacher, and a distribution that matched first four moments with the standard Gaussian: $\mathbb{P}(X=\pm \frac{1}{\sqrt{2}})=\frac{4}{9}\text{ each}, 
		\mathbb{P}(X=\pm \sqrt{5})=\frac{1}{18} \text{ each}$.}
\end{figure}

In the Gaussian case, the existence of this constant was first established by Wilkins~\cite{wilkins1988asymptotic} and later by Edelman and Kostlan~\cite{EdelmanKostlan} through a careful evaluation of the Kac–Rice formula.
\begin{equation} \label{eq:GaussianConstant}
	\E N_{n, N(0,1)} = \frac{2}{\pi} \log n + C_{\mathrm{Gau}} + o(1).
\end{equation}
Here $C_{\mathrm{Gau}} \approx 0.625738072\ldots$ is given by a complicated but explicit integral. Moreover, the $o(1)$ term can be expanded as a convergent series of explicit functions of $n$, yielding a complete Taylor expansion.

 Do--Nguyen--Vu \cite{DoHoiNguyenVu} made substantial progress toward this conjecture and confirmed the existence of $C_{\xi}$ with the extra assumption that $\xi$ is either uniform on $\{\pm 1, \pm 2, \ldots, \pm N\}$ or continuous with a $p$-integrable density function for some $p>1$.

 \vspace{5mm}
 
In the first main result of this paper, we remove these extra conditions.
\begin{theorem} [Existence of $C_{\xi}$ in i.i.d. setting]\label{thm:mainthm1}
Let $\ep_0$ and $M_0$ be any positive constants. Consider the sequence of Kac polynomials $(P_n)_{n \ge 1}$, where $\xi_1, \xi_2, \dots$ are i.i.d. copies of a random variable $\xi$ with zero mean, unit variance, and $\E |\xi|^{ 2 + \ep_0}\le M_0$. Then there exist constants $C_{\xi, I}, C_{\xi}$ such that as $n\to\infty$,
\begin{equation}
    \E N_{P_n}(I) = \dfrac{1}{2\pi}\log n + C_{\xi, I} + o(1), \label{eq:I1}
\end{equation}
where $I$ is any of the intervals $(-\infty, -1], [-1, 0], [0, 1], [1, \infty)$, with endpoints included or excluded. Summing over these intervals yields
\begin{equation}
	\E N_{P_n}(\R) = \dfrac{2}{\pi}\log n + C_{\xi} + o(1).\notag
\end{equation}
\end{theorem}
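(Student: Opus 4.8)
The plan is to reduce the claim to a statement about convergence of $\E N_{P_n}(I)$ for each of the four dyadic-type intervals, then exploit a scale-invariance/self-similarity structure of the Kac ensemble near $\pm 1$ together with a universality (comparison) argument against the Gaussian model, for which \eqref{eq:GaussianConstant} already gives the needed expansion. First I would record the symmetries: replacing $x$ by $-x$ gives $N_{P_n}([0,1])\overset{d}{=}N_{P_n}([-1,0])$ and similarly on the unbounded pieces, while replacing $x$ by $1/x$ (equivalently reversing the coefficient vector, which leaves the i.i.d.\ law invariant) gives $N_{P_n}([0,1])\overset{d}{=}N_{P_n}([1,\infty))$ up to the root at $0$, which occurs with probability $0$ here. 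So it suffices to prove \eqref{eq:I1} for $I=[0,1]$, and the four-fold sum then produces $\tfrac{2}{\pi}\log n + C_\xi + o(1)$ with $C_\xi = 4 C_{\xi,I}$.

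Next I would localize. Split $[0,1]$ into the bulk $[0, 1 - n^{-1+\delta}]$ and the edge window $[1-n^{-1+\delta}, 1]$ for a small $\delta>0$. In the bulk, the number of real roots is $o(1)$ in expectation by a repulsion/anti-concentration estimate (the Kac polynomial has essentially no real roots away from $\pm 1$; this is classical and already implicit in \cite{DoONguyenVu2018}), so all the $\log n$ and the constant come from the edge. In the edge window I change variables $x = 1 - s/n$, $s\in[0,n^\delta]$, and study the rescaled process $Q_n(s) := P_n(1 - s/n)/\sqrt{\Var P_n(1-s/n)}$. The key deterministic input is that the covariance kernel of $(Q_n, Q_n')$ converges, uniformly on compact $s$-sets, to an explicit limiting kernel $K_\infty$ (the one governing the local limit of Kac polynomials at the edge, cf.\ \cite{kabluchko2013roots}); combined with a CLT for the finite-dimensional distributions of $(Q_n(s), Q_n'(s))$ this gives convergence in distribution of the local root-counting process. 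The universality machinery of \cite{DoONguyenVu2018, DoHoiNguyenVu}, specifically the comparison of $\E N$ via a four-moment / replacement argument together with quantitative Kac–Rice bounds on small intervals, upgrades this to convergence of expectations: $\E N_{P_n}([1-n^{-1+\delta},1])$ and the corresponding Gaussian quantity differ by $o(1)$, uniformly. Since the Gaussian edge count is known from \eqref{eq:GaussianConstant} to be $\tfrac{1}{2\pi}\log n + C_{\mathrm{Gau}}/4 + o(1)$ on $[0,1]$, we extract the existence of a finite limit for the ``$O(1)$ part'' of the $\xi$-model: $C_{\xi, I} := \lim_{n\to\infty}\big(\E N_{P_n}([0,1]) - \tfrac{1}{2\pi}\log n\big)$ exists. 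The alternative description of $C_\xi$ promised in the abstract is exactly this limit written through the limiting local process: $C_{\xi,I} = C_{\mathrm{Gau}}/4 + \big(\text{Kac–Rice-type integral of }K_\infty^{(\xi)} - K_\infty^{\mathrm{Gau}}\big)$, or more robustly as the limit of a telescoping sum of increments $\E N_{P_{2^{k+1}}}([0,1]) - \E N_{P_{2^k}}([0,1])$ which one shows is summable.

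To make ``the limit exists'' rigorous without already assuming it, I would run a Cauchy-sequence argument: show that $a_n := \E N_{P_n}([0,1]) - \tfrac{1}{2\pi}\log n$ satisfies $|a_{2n} - a_n| = o(1)$ with a summable rate, by comparing the polynomial $P_n$ with (the relevant truncation/extension of) $P_{2n}$ on the edge window and using that the difference of their local kernels is $O(n^{-c})$ on compacts plus tail bounds of size $o(1)$ from the region $s \asymp n^\delta$. Monotone-type control on the increments plus the a priori bound $a_n = O(1)$ from \cite{DoONguyenVu2018} then forces convergence along $n = 2^k$, and a sandwiching argument (root counts are monotone under nothing, but $\E N_{P_n}([0,1])$ varies slowly because adding one coefficient changes the edge kernel by $O(1/n)$) extends it to all $n$. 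The main obstacle, I expect, is precisely this last point — transferring a subsequential limit to a genuine $o(1)$ statement for every $n$ — since Kac root counts are not monotone in $n$ and one has to quantify the stability of $\E N_{P_n}([0,1])$ in $n$ with an error that is genuinely $o(1)$ rather than merely $O(1)$; this is where the uniform (in $n$) local-kernel convergence and the uniform Kac–Rice small-interval estimates have to be pushed, and it is also the place where the i.i.d.\ hypothesis is used most essentially (though, as the abstract notes, the argument is robust enough to survive dropping ``identically distributed'' if one has matching first/second moments and uniform $(2+\ep_0)$-moment control).
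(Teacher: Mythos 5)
Your proposal breaks down at the localization step, and the breakdown is conceptual rather than technical. You split $[0,1]$ into a bulk $[0,1-n^{-1+\delta}]$ and an edge window $[1-n^{-1+\delta},1]$ and assert that the bulk contributes $o(1)$ roots in expectation, so that ``all the $\log n$ and the constant come from the edge.'' That is false: the real roots of a Kac polynomial are spread logarithmically over scales, with density roughly $\tfrac{1}{\pi(1-t^2)}$ up to distance $\asymp 1/n$ from $1$, so $\E N_{P_n}\bigl([0,1-n^{-1+\delta}]\bigr) \approx \tfrac{1-\delta}{2\pi}\log n$, i.e.\ the bulk carries most of the logarithm, not $o(1)$. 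Relatedly, your picture of where the constant lives is inconsistent with the theorem you are trying to prove: at the edge the local behaviour is governed by a CLT and is universal (this is exactly the content of the comparison result the paper quotes from Nguyen--Vu), so if the whole constant came from an edge window one would get a universal $C_\xi$, contradicting the non-universality asserted in the conjecture and in Lemma \ref{lm:nonUniversality}. In the paper's argument the decomposition is at \emph{constant} scale: the window $[1-C^{-1},1]$ with $C$ a large constant already captures the full $\tfrac{1}{2\pi}\log n$ plus the Gaussian constant (by universality plus an explicit Kac--Rice computation), while the non-universal part of $C_\xi$ comes from the region $(0,1-C^{-1})$ at fixed distance from $1$, where only finitely many coefficients matter. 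Handling that region is the real work: one proves $\E N_{P_n}([r,R])\to \E N_{P_\infty}([r,R])$ for the infinite random series $P_\infty=\sum_k \xi_k x^k$ via a functional limit theorem, continuity of the root-counting map (which needs the recent Michelen--Yakir theorem that $P_\infty$ a.s.\ has no double roots, plus an anti-concentration bound at the endpoints), and uniform integrability of the root counts; a separate argument controls $(0,\delta)$. None of these ingredients, nor any substitute for them, appears in your proposal, and your Cauchy-sequence/doubling scheme does not supply them — it never identifies a limit object, and the claimed $O(n^{-c})$ kernel comparison between $P_n$ and $P_{2n}$ on an $n$-dependent edge window is not something the cited universality results give.

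A secondary error: your symmetry reduction claims $N_{P_n}([0,1])\overset{d}{=}N_{P_n}([-1,0])$ and hence $C_\xi=4C_{\xi,I}$. The substitution $x\mapsto -x$ turns the coefficients into $(-1)^i\xi_i$, which are \emph{not} identically distributed as the $\xi_i$ unless $\xi$ is symmetric, so the four interval constants need not coincide and the factor-of-four identity is unjustified. This is precisely why the paper proves the non-i.i.d.\ version (Theorem \ref{thm:mainthm}) and then sums four possibly different constants $C_{\xi,I}$. (The reversal $x\mapsto 1/x$ step is fine in the i.i.d.\ case.)
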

This theorem is a special case of the following result, where the assumption that the random variables $\xi_i$ are identically distributed is dropped.
 \begin{theorem}[Existence of $C_{\xi}$ in non-i.i.d. setting]\label{thm:mainthm}
 	Let $\ep_0$ and $M_0$ be any positive constants. Consider the sequence of Kac polynomials $(P_n)_{n \ge 1}$, where $\xi_1, \xi_2, \dots$ are independent random variables with zero mean, unit variance, and bounded $(2+\ep_0)$-moment: $\E |\xi_i|^{ 2 + \ep_0}\le M_0$ for all $i$. Assume furthermore that there exists a random variable $ \xi$ and a positive constant $c$ such that for all $\ep\in (0, c)$ and all $i\in \N$,
 	\begin{equation}\label{cond:decay0}
 		\P(|\xi_i|\le \ep, \xi_i\neq 0) \le \frac{1}{c}\P(| \xi|\le \ep,  \xi\neq 0).
 	\end{equation}

 	Then there exists a constant $C_{\xi_0, \xi_1, \dots, (0, 1]}$ such that as $n\to\infty$,
 	\begin{equation}
 		\E N_{P_n}((0, 1] ) = \dfrac{1}{2\pi}\log n + C_{\xi_0, \xi_1, \dots, (0, 1]} + o(1). \label{eq:I}
 	\end{equation}
 \end{theorem}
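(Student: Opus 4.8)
The plan is to estimate $\E N_{P_n}((0,1])$ by splitting the interval $(0,1]$ into a bulk region and a region near the endpoint $1$, and to show that on each piece the expected root count converges to an explicit limiting quantity. On the bulk region $(0, 1-\delta]$ for fixed small $\delta>0$, the polynomial $P_n$ is well-approximated — after a suitable scaling of $x$ near a point $t$ — by the random power series $F(z) = \sum_{k\ge 0} \zeta_k z^k$ with independent coefficients $\zeta_k$ having the same first two moments as the $\xi_k$; this is the standard local universality heuristic for Kac polynomials, where the ``active'' window around $t\in(0,1)$ has width of order $1-t$. The key point is to invoke the universality machinery of \cite{TVpoly, DoONguyenVu2018, DoHoiNguyenVu} in a quantitative form: the expected number of real roots of $P_n$ in a dyadic-type subinterval $[1-2^{-j-1}, 1-2^{-j}]$ differs from the corresponding quantity for the Gaussian analog by $o(1)$, with the error summable over $j$ up to $j \approx \log_2 n$. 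Summing the Gaussian contributions (which are known to be $\frac{1}{2\pi}\log n + O(1)$ by \eqref{eq:GaussianConstant}) and carefully tracking the constant term is what produces $\frac{1}{2\pi}\log n + C_{\xi_0,\xi_1,\dots,(0,1]} + o(1)$.

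More concretely, I would proceed as follows. First, fix a large constant $L$ and localize: write $(0,1] = (0, 1-L/n] \cup (1-L/n, 1]$. On the inner interval $(0, 1-L/n]$, further decompose into dyadic scales $t_j = 1 - 2^{-j}$ and use the change of variables that rescales a neighborhood of $t_j$ of width comparable to $1-t_j$; in these coordinates, $P_n$ restricted to the $j$-th scale converges in distribution (as $n\to\infty$ with $j$ fixed) to a limiting random analytic function $F_\infty$ whose real-zero intensity is finite and $j$-independent. The comparison between $\E N_{P_n}$ on scale $j$ and the limiting intensity is done via the replacement argument: replace the $\xi_k$ one block at a time by Gaussians, controlling each swap by a Lindeberg-type estimate that uses the $(2+\ep_0)$-moment bound and — crucially — the anticoncentration input, which is exactly where hypothesis \eqref{cond:decay0} enters. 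Condition \eqref{cond:decay0} guarantees a uniform lower bound on how much the relevant linear combinations of the $\xi_i$ spread out, so that the density of $(P_n(t), P_n'(t))$ near the origin is controlled uniformly in $i$ and $n$; without it, a coefficient could be too concentrated near $0$ and the Kac–Rice-type density could blow up. Second, on the outer interval $(1-L/n, 1]$, rescale by setting $x = 1 - s/n$; then $P_n(1-s/n) \to$ a limiting Gaussian-free-type process in $s$ (the ``edge'' scaling limit), and $\E N_{P_n}((1-L/n,1])$ converges to the expected number of zeros of that limiting process on $(0, L]$, with an error $o(1)$ as $n\to\infty$ and then $\to$ a constant as $L\to\infty$. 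The constant $C_{\xi_0,\xi_1,\dots,(0,1]}$ is then assembled as the sum of: the limit of $\big(\sum_{j} I_j - \frac{1}{2\pi}\log n\big)$ from the bulk, where $I_j$ is the $j$-th scale intensity integral, plus the edge constant, plus lower-order corrections.

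I would organize the writing around three lemmas: (i) a \emph{bulk universality} lemma stating that for each fixed $\delta$, $\big|\E N_{P_n}((0,1-\delta]) - \E N_{G_n}((0,1-\delta])\big| = o(1)$ where $G_n$ is the Gaussian Kac polynomial, with the $o(1)$ uniform enough to let $\delta = \delta_n \to 0$ slowly; (ii) an \emph{edge} lemma giving $\E N_{P_n}((1-L/n, 1]) = c_{\mathrm{edge}}(L) + o(1)$ and $c_{\mathrm{edge}}(L) \to c_{\mathrm{edge}}$ as $L\to\infty$ — here again universality at the edge is needed, and the same anticoncentration hypothesis does the work; and (iii) a \emph{Gaussian} lemma extracting $\E N_{G_n}((0,1]) = \frac{1}{2\pi}\log n + C_{\mathrm{Gau}}/2 + o(1)$ from \eqref{eq:GaussianConstant} together with the symmetry $x \mapsto 1/x$ that relates $(0,1]$ and $[1,\infty)$. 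Combining (i)–(iii) with a telescoping argument across the transition scale $1-\delta_n$ versus $1 - L/n$ gives the claim.

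The main obstacle I expect is making the universality comparison \emph{quantitative and uniform across all dyadic scales simultaneously}, so that the accumulated error over the roughly $\log_2 n$ scales is still $o(1)$ rather than merely $O(1)$ per scale (which would only recover the Do–Nguyen–Vu $O(1)$ bound). This requires a Lindeberg swap whose error on scale $j$ decays — say like $2^{-c j}$ or $n^{-c}$ uniformly in $j$ — which in turn forces one to prove \emph{uniform} anticoncentration and regularity estimates for the two-dimensional density of $(P_n(t), P_n'(t))$ at scale $j$. The delicate case is small $j$ (the region genuinely near $x=1$), where the polynomial has few effectively-contributing coefficients and classical small-ball inequalities are weakest; this is precisely the regime where the structural hypothesis \eqref{cond:decay0} must be leveraged to prevent the density from degenerating, and handling it carefully — perhaps by a separate argument matching the edge scaling limit from below — is where most of the technical work will lie.
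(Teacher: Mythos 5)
Your central ``bulk universality'' lemma (i) is where the argument breaks, and it breaks for a structural reason. The region $(0,1-\delta]$ is precisely the \emph{non-universal} part of the problem: for $t$ bounded away from $1$ only boundedly many coefficients contribute effectively to $P_n(t)$ (the weights $t^k$ decay geometrically), so $\E N_{P_n}((0,1-\delta])$ converges to $\E N_{P_\infty}((0,1-\delta])$ for the limiting series $P_\infty(x)=\sum_{k\ge 0}\xi_k x^k$, a quantity that genuinely depends on the law of the coefficients. If your claimed comparison $\bigl|\E N_{P_n}((0,1-\delta]) - \E N_{G_n}((0,1-\delta])\bigr| = o(1)$, uniform enough to let $\delta\to 0$, were true, the constant in \eqref{eq:I} would coincide with the Gaussian constant for every admissible coefficient law, which is exactly the opposite of the non-universality the theorem is about (compare Figure \ref{fig1} and Corollary \ref{cor:C}, where the constant is expressed through $\E N_{P_\infty}\bigl((0,1-C^{-1})\bigr)$ and hence through the law of $\xi$). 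The mechanism you propose cannot rescue this: a Lindeberg swap needs many small contributions, whereas at fixed $t\in(0,1-\delta]$ there are only $O(1)$ effective summands, so the per-scale swap error is $O(1)$, not $2^{-cj}$ or $n^{-c}$; the universality actually available (Theorem \ref{thm:universality}, from \cite{nguyenvuCLT}) gives an error at most $\ep$ on one fixed macroscopic window $[1-1/C_0,1]$ after choosing $C_0$ large, never a summable per-dyadic-scale bound. You also misidentify the role of hypothesis \eqref{cond:decay0}: it is not an anticoncentration input for the density of $(P_n(t),P_n'(t))$; in the paper it only serves to control roots near the origin, by ensuring the first nonzero coefficient is not tiny with non-negligible probability.

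For contrast, the paper's proof uses universality \emph{only} near $1$: on $[1-1/C,1]$ the comparison with the Gaussian Kac polynomial plus the explicit Kac--Rice computation yields $\frac{1}{2\pi}\log\frac{n}{C}+C_{\mathrm{Gau},[0,1]}-\frac{1}{2\pi}\log 2$ up to $\ep$, and this window alone carries the $\frac{1}{2\pi}\log n$ term (your edge window $(1-L/n,1]$ captures only $O(1)$ expected roots, so in your scheme the $\log n$ would have to come from the problematic dyadic sum). The intermediate interval $(1-\delta,1-1/C)$ is treated not by comparison with Gaussians but by proving $\E N_{P_n}\to\E N_{P_\infty}$ there: a functional limit theorem for $P_n\to P_\infty$ as random analytic functions, continuity of the zero-counting map via a Hurwitz-type lemma, the almost-sure absence of double roots of $P_\infty$ (Michelen--Yakir) together with anti-concentration at the endpoints, and uniform integrability via a Jensen-formula moment bound; the interval $(0,1-\delta]$ contributes at most $\ep$ using \eqref{cond:decay0}. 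The non-universal constant is exactly this bulk limit, so any correct proof must retain a distribution-dependent limiting object in the bulk rather than replacing it by the Gaussian. (A minor additional slip: for the Gaussian polynomial the constant attached to $(0,1]$ is $C_{\mathrm{Gau}}/4$, not $C_{\mathrm{Gau}}/2$, since the four intervals $(-\infty,-1],[-1,0],[0,1],[1,\infty)$ contribute equally.)
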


The next corollary provides an alternative description of the constants $C_{\xi, I}$.
 \begin{corollary}\label{cor:C}
 	Under the assumptions of Theorem \ref{thm:mainthm}, we have
 	\begin{equation}
 		C_{\xi_0, \xi_1, \dots, (0, 1]} =C_{\mathrm{Gau}, (0, 1]}- \frac{1}{2\pi}\log 2+\lim_{C\to\infty}\left [ \E N_{P_{\infty}} \left (\left (0, 1-\frac1C\right )\right ) -\frac{1}{2\pi}\log C\right ], \label{eq:C}
 	\end{equation}
 where the convergence in $C$ is uniform over all distributions of the $\xi_i$, given $\ep_0$ and $M_0$.
 \end{corollary}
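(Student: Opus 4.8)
The plan is to run, at a fixed cutoff $1-\tfrac1C$, the decomposition of $(0,1]$ into a ``bulk'' interval $(0,1-\tfrac1C)$ and an ``edge'' interval $[1-\tfrac1C,1]$ that already underlies the proof of Theorem~\ref{thm:mainthm}, to identify what each piece contributes to the constant, to compare with the Gaussian model, and finally to let $C\to\infty$. Write $P_\infty(x)=\sum_{k\ge0}\xi_kx^k$, which converges almost surely and locally uniformly on $(-1,1)$ under the $(2+\ep_0)$-moment hypothesis, so that $N_{P_\infty}(J)$ is almost surely finite on every compact $J\subset(-1,1)$. For a fixed $C>1$ the polynomials $P_n$ converge to $P_\infty$ locally uniformly on $\{|x|\le1-\tfrac1C\}$; combining this with a Hurwitz-type argument passing from uniform convergence to convergence of root counts inside $(0,1-\tfrac1C)$, and with the uniform integrability of $N_{P_n}((0,1-\tfrac1C))$ coming from the moment bound — both of which are ingredients in the proof of Theorem~\ref{thm:mainthm} — one obtains, uniformly over all admissible families $(\xi_i)_i$,
\[
\lim_{n\to\infty}\E N_{P_n}\left(\left(0,1-\tfrac1C\right)\right)=\E N_{P_\infty}\left(\left(0,1-\tfrac1C\right)\right)=:a_{\xi}(C).
\]
Since the partition $(0,1]=(0,1-\tfrac1C)\cup[1-\tfrac1C,1]$ is exact, subtracting this from Theorem~\ref{thm:mainthm} shows that, for every $C>1$, the edge limit $e_\xi(C):=\lim_{n\to\infty}\big[\E N_{P_n}([1-\tfrac1C,1])-\tfrac1{2\pi}\log n\big]$ exists and equals $C_{\xi_0,\xi_1,\dots,(0,1]}-a_\xi(C)$.

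The key analytic input is that the edge contribution is universal up to the logarithmic term. The invariance-principle analysis of $P_n$ near $x=1$ that is used to prove Theorem~\ref{thm:mainthm} — a central limit theorem at scale $1/n$ for $P_n(1-t/n)$, together with a local-CLT estimate showing that on $[1-\tfrac1C,\,1-\tfrac Dn]$ the Kac--Rice density of $P_n$ lies within a uniformly small error of the limiting density $\tfrac1{\pi(1-x^2)}$ once $C,D$ are large — yields a decomposition
\[
e_\xi(C)=-\tfrac1{2\pi}\log C+\kappa+\eta_\xi(C),
\]
where $\kappa$ does not depend on the distributions of the $\xi_i$, and $\eta_\xi(C)\to0$ as $C\to\infty$ uniformly over the admissible families. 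To evaluate $\kappa$, specialize to i.i.d.\ standard Gaussians: passing to the limit in the explicit Gaussian Kac--Rice density (equivalently, computing directly the zero density $\tfrac1{\pi(1-x^2)}$ on $(-1,1)$ of $\sum_{k\ge0}g_kx^k$) gives $a_{\mathrm{Gau}}(C)=\int_0^{1-1/C}\tfrac{dx}{\pi(1-x^2)}=\tfrac1{2\pi}\log(2C-1)$, and substituting this together with \eqref{eq:GaussianConstant} into the displayed decomposition forces $\kappa=C_{\mathrm{Gau},(0,1]}-\tfrac1{2\pi}\log2$.

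Assembling, for every $C>1$ we have
\[
C_{\xi_0,\xi_1,\dots,(0,1]}=a_\xi(C)+e_\xi(C)=\E N_{P_\infty}\left(\left(0,1-\tfrac1C\right)\right)-\tfrac1{2\pi}\log C+\kappa+\eta_\xi(C),
\]
so $\E N_{P_\infty}((0,1-\tfrac1C))-\tfrac1{2\pi}\log C=C_{\xi_0,\xi_1,\dots,(0,1]}-\kappa-\eta_\xi(C)$; since $\eta_\xi(C)\to0$ uniformly, the limit in $C$ exists and is attained uniformly over the admissible families, and inserting the value of $\kappa$ gives exactly \eqref{eq:C}. The main obstacle is the uniform-in-$C$ edge universality, that is, producing $\eta_\xi(C)$ with a rate of decay that does not depend on the $\xi_i$; this is precisely where the local central limit theorem for $P_n$ near $x=1$ enters, and it is the only substantial step beyond what Theorem~\ref{thm:mainthm} already provides (the identification of $\kappa$ being a short explicit Gaussian integral). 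An essentially equivalent route, more directly tied to the statement, is to work with $P_\infty$ itself: show that its Kac--Rice density satisfies $\rho_\infty(x)-\tfrac1{\pi(1-x^2)}\to0$ as $x\to1^-$ fast enough to be integrable near $1$ (so that $\lim_{C\to\infty}[\E N_{P_\infty}((0,1-\tfrac1C))-\tfrac1{2\pi}\log(2C-1)]$ exists), which again rests on a local limit theorem for the convergent sum $\sum_k\xi_kx^k$; this is the version we expect to be most convenient when subsequently deducing continuity of $C_\xi$ in the law of $\xi$.
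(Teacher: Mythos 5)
Your proposal is correct and follows essentially the same route as the paper: split $(0,1]$ at $1-1/C$, use the edge universality near $1$ (the paper's \eqref{eq:uni}, i.e.\ Theorem \ref{thm:universality} combined with the explicit Gaussian Kac--Rice evaluation, which is exactly your $\kappa=C_{\mathrm{Gau},(0,1]}-\frac{1}{2\pi}\log 2$ together with a uniformly small error $\eta_\xi(C)$ --- this is already available from the proof of Theorem \ref{thm:mainthm} and need not be re-derived via a local CLT), and use the bulk convergence $\E N_{P_n}((0,1-1/C))\to \E N_{P_\infty}((0,1-1/C))$ before sending $C\to\infty$. The only cosmetic caveats are that the uniformity over distributions you claim for the bulk $n$-limit is neither needed nor established (only the convergence as $C\to\infty$ is uniform), and that the portion of the bulk near $0$ is not covered by the Hurwitz argument alone but by the separate estimate of Lemma \ref{lm:ConvNear0}, applied to both $P_n$ and $P_\infty$, as in the paper.
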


This corrolary allows us to show that the constant $C_{\xi}$ in Theorem \ref{thm:mainthm1} is in fact continuous.  
 \begin{theorem}[Continuity of $C_{\xi}$]\label{thm:cont}
 	Let $\mu_{n}$ and $\mu_{\infty}$ be probability measures on $\R$ with zero mean and unit variance satisfying the following properties.
 	\begin{enumerate}
 		\item The measures $\mu_{n}$ converge to $\mu$ in distribution as $n\to\infty$. Moreover, $\lim_{n\to\infty}\mu_{n}(0)= \mu_\infty(0)<1$.
 		\item There exist $M_0 <\infty$ and $\ep_0>0$ such that
 		$\E_{\xi\sim \mu_n}|\xi|^{2+\ep_0} \le M_0 $ for all $n\in \N\cup\{\infty\}$.
 	\end{enumerate}
 Let $C_{\mu_{n}}$ be the constant in Theorem \ref{thm:mainthm1} when $\xi_0, \xi_1, \dots$ are i.i.d. $\mu_{n}$.	Then $C_{\mu_{n}}\to C_{\mu_\infty}$ as $n \rightarrow \infty$.
 \end{theorem}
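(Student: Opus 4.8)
The plan is to exploit the formula \eqref{eq:C} from Corollary \ref{cor:C}, which isolates the dependence of $C_{\mu_n}$ on the distribution into a single term, namely the limit $L(\mu) := \lim_{C\to\infty}\bigl[\E N_{P_\infty}\bigl((0,1-\tfrac1C)\bigr) - \tfrac{1}{2\pi}\log C\bigr]$, where $P_\infty$ is the relevant limiting (Gaussian-analytic-type) random series built from the coefficient law. Since $C_{\mathrm{Gau},(0,1]}$ and $-\tfrac{1}{2\pi}\log 2$ in \eqref{eq:C} are genuine absolute constants, proving $C_{\mu_n}\to C_{\mu_\infty}$ reduces to showing $L(\mu_n)\to L(\mu_\infty)$. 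First I would verify that the hypotheses of Theorem \ref{thm:mainthm} (in particular the lower-tail condition \eqref{cond:decay0}) hold uniformly for the i.i.d.\ sequences drawn from $\mu_n$: the assumption $\mu_n(0)\to\mu_\infty(0)<1$ together with weak convergence gives a uniform lower bound on $\P(|\xi|\le\ep,\xi\neq 0)$ for $\ep$ in a fixed range, so one may take a common reference variable $\xi$ and a common constant $c$; combined with the uniform $(2+\ep_0)$-moment bound, Corollary \ref{cor:C} applies to every $\mu_n$ and to $\mu_\infty$ with the \emph{same} rate of convergence in $C$.

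Next I would use precisely that uniformity. Fix $\delta>0$. By the uniform convergence in $C$ asserted in Corollary \ref{cor:C}, choose $C_0=C_0(\delta)$ large enough that
\[
\Bigl| L(\mu) - \E N_{P_\infty^{(\mu)}}\Bigl(\bigl(0,1-\tfrac{1}{C_0}\bigr)\Bigr) + \tfrac{1}{2\pi}\log C_0 \Bigr| < \delta \qquad \text{for all } \mu\in\{\mu_n\}_n\cup\{\mu_\infty\}.
\]
It therefore suffices to prove that for this fixed $C_0$,
\[
\E N_{P_\infty^{(\mu_n)}}\Bigl(\bigl(0,1-\tfrac{1}{C_0}\bigr)\Bigr) \;\longrightarrow\; \E N_{P_\infty^{(\mu_\infty)}}\Bigl(\bigl(0,1-\tfrac{1}{C_0}\bigr)\Bigr) \qquad (n\to\infty).
\]
On the \emph{compact} subinterval $[0,1-\tfrac{1}{C_0}]\subset(-1,1)$, the random series $P_\infty^{(\mu)}(x)=\sum_{k\ge0}\xi_k x^k$ converges almost surely and defines a random analytic function whose coefficient vector has a distribution depending continuously (in the weak topology) on $\mu$. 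I would show the expected number of real zeros on this fixed compact interval is continuous in $\mu$ by a Kac–Rice / Skorokhod-coupling argument: weak convergence $\mu_n\Rightarrow\mu_\infty$ lifts (via a Skorokhod representation on the sequence space, using the uniform $(2+\ep_0)$-moment to control tails of the tail sums $\sum_{k>K}\xi_k x^k$ uniformly on $[0,1-\tfrac1{C_0}]$) to a coupling in which $P_\infty^{(\mu_n)}\to P_\infty^{(\mu_\infty)}$ together with derivatives, uniformly on the compact interval, almost surely; then Hurwitz-type stability of simple zeros plus the fact that the limit has a.s.\ no multiple real zeros and no zeros at the endpoints (which follows because the limiting coefficients are nondegenerate, using $\mu_\infty(0)<1$ and unit variance) gives a.s.\ convergence of $N_{P_\infty^{(\mu_n)}}$ on the interval, and a uniform-integrability bound on this count (again from moments, or from the $\tfrac{1}{2\pi}\log C_0$-type a priori bound used to prove Corollary \ref{cor:C}) upgrades a.s.\ convergence to convergence of expectations.

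Finally I would assemble the pieces: combining the two displays, $\limsup_{n}|L(\mu_n)-L(\mu_\infty)|\le 2\delta+\limsup_n|\E N_{P_\infty^{(\mu_n)}}(\cdots)-\E N_{P_\infty^{(\mu_\infty)}}(\cdots)| = 2\delta$, and letting $\delta\to0$ gives $L(\mu_n)\to L(\mu_\infty)$, hence $C_{\mu_n}\to C_{\mu_\infty}$. The main obstacle I anticipate is the passage from weak convergence of $\mu_n$ to the convergence of the expected zero count on the fixed compact interval: one must rule out escape of zeros to the endpoints or collision of zeros in the limit, which is where the nondegeneracy hypothesis $\mu_\infty(0)<1$ is essential, and one must secure uniform integrability of $N_{P_\infty^{(\mu_n)}}\bigl((0,1-\tfrac1{C_0})\bigr)$ so that a.s.\ (or in-distribution) convergence of the counts yields convergence of their means — both of these require care but should follow from the uniform moment assumption together with the anticoncentration estimates already established for $P_\infty$ in the proof of Corollary \ref{cor:C}.
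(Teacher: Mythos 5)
Your overall route is the same as the paper's: use the uniform-in-$C$ statement of Corollary \ref{cor:C} to reduce the problem to convergence of $\E N_{Q_n}\bigl(\bigl(0,1-\tfrac{1}{C_0}\bigr)\bigr)$ for one fixed large $C_0$, obtain that convergence by coupling the coefficient sequences (the paper uses the quantile coupling $\xi_{k,n}=F_n^{-1}(U_k)$, which is exactly your Skorokhod step), then Hurwitz-type stability plus uniform integrability from the moment bound. However, there is a genuine gap at the left endpoint. You apply the continuity-of-the-zero-count argument to the whole interval $\bigl(0,1-\tfrac{1}{C_0}\bigr)$ and assert that the limit a.s.\ has no zeros at the endpoints and no multiple zeros there ``because the limiting coefficients are nondegenerate, using $\mu_\infty(0)<1$.'' That is false when $\mu_\infty(0)>0$, which the hypotheses allow: $Q_\infty(0)=\xi_{0,\infty}$ vanishes with probability $\mu_\infty(0)$, and by Lemma \ref{lm:dbr} the probability of a double zero in $B(0,1)$ equals $\mu_\infty(0)^2>0$ (the double root sits at $0$). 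The anticoncentration estimate you invoke (Lemma \ref{lem:ess}, used in Lemma \ref{lm:x}) explicitly requires $x\neq 0$, so it cannot repair the endpoint; consequently Lemma \ref{lem:IksanovLemma41} does not apply to the interval as you use it, and convergence of the counts on $\bigl(0,1-\tfrac{1}{C_0}\bigr)$ does not follow as stated.

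The paper's fix is to split $\bigl(0,1-\tfrac{1}{C_0}\bigr)=(0,\delta)\cup\bigl[\delta,1-\tfrac{1}{C_0}\bigr]$: on the compact piece away from $0$ your argument goes through (this is Lemma \ref{lem:ConvFarFrom01Cont}), while near $0$ one does not prove convergence at all but instead shows $\E N_{Q_n}((0,\delta))<\ep$ uniformly in large $n$ (Lemma \ref{lm:small}); the input needed there is precisely the uniform-in-$n$ smallness of $\P_{\mu_n}(|\xi|\le\delta,\ \xi\neq 0)$, which you do derive from weak convergence and $\mu_n(0)\to\mu_\infty(0)<1$ in your first paragraph — but you aim it at verifying \eqref{cond:decay0}, which is unnecessary for i.i.d.\ coefficients, rather than at the place where it is actually needed. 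Two smaller omissions: \eqref{eq:C} only describes $C_{\mu,(0,1]}$, so to conclude for $C_{\mu_n}$ you must still assemble the constants for the other three intervals (the reflection $x\mapsto -x$ produces the non-identically distributed coefficients $(-1)^k\xi_k$, which is why Corollary \ref{cor:C} is stated in the non-i.i.d.\ setting, and the endpoint bookkeeping at $x=\pm 1$ uses Lemma \ref{lm:x}) together with the expected multiplicity at $0$, namely $\mu_n(0)/(1-\mu_n(0))$, which converges by hypothesis; and the uniform integrability should be quoted for the infinite series $Q_n$ with constants depending only on $R,\ep_0,M_0$, as the paper notes when adapting Lemma \ref{lem:momentsOfNumRoots}.
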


 Towards the second part of Conjecture \ref{conj}, if one allows $\xi$ to have mass at 0, then the constant $C_{\xi}$ can be made as large as possible. To see this, let $p=\P(\xi=0)$, then the multiplicitiy at $0$ has expectation
 \begin{eqnarray}
 	\E N_{P_n}(\{0\}) = \sum_{m=1}^{n} m \P(\xi_0=\dots =\xi_{m-1}=0) \P(\xi_{m}\neq 0)\to \sum_{m=1}^{\infty} m p^{m} (1-p) =\frac{p}{1-p}\label{eq:0}
 \end{eqnarray}
 which can take any value in $[0, \infty)$. This, in combination with \eqref{eq:uni} that we prove later, implies that the constant $C_{\xi}$ can be arbitrarily large.
 \begin{lemma}\label{lm:nonUniversality}
 	For all $M > 0$, there exists a distribution $\xi$ with zero mean, unit variance, and finite $(2 + \ep_0)$-moment such that the constant terms $C_{\xi, [0, 1]}, C_{\xi, [-1, 0]}$ and $C_{\xi}$ in Theorem \ref{thm:mainthm1} are at least $M$.
 \end{lemma}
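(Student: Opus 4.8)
The idea is to concentrate the mass of $\xi$ near the origin: by \eqref{eq:0} the expected multiplicity of the root at $0$ converges to $p/(1-p)$, where $p=\P(\xi=0)$, and this can be made as large as we wish; it will then dominate the (boundedly small) contributions of the roots in the four ``bulk'' intervals. Concretely, fix a small $\ep_0>0$ and, for a parameter $p\in(0,1)$ to be sent to $1$, let $\xi=\xi^{(p)}$ take the value $0$ with probability $p$ and each of $\pm(1-p)^{-1/2}$ with probability $(1-p)/2$. Then $\E\xi=0$, $\Var\xi=1$, and $\E|\xi|^{2+\ep_0}=(1-p)^{-\ep_0/2}=:M_0(p)<\infty$, so Theorem \ref{thm:mainthm1} applies with these values of $\ep_0,M_0$ and yields the constants $C_{\xi,I}$ and $C_\xi$. (Since the coefficients are i.i.d., condition \eqref{cond:decay0} holds trivially, with comparison variable $\xi$ itself and $c=1$, so Theorem \ref{thm:mainthm} and Corollary \ref{cor:C} are available as well.)

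Next I would isolate the root at $0$ using additivity of $N_{P_n}(\cdot)$ over the partition $\R=\{0\}\sqcup(-\infty,-1]\sqcup(-1,0)\sqcup(0,1)\sqcup[1,\infty)$. Each of these four intervals is one of those allowed in \eqref{eq:I1}, so $\E N_{P_n}(I_j)=\tfrac1{2\pi}\log n+C_{\xi,I_j}+o(1)$; combining with $\E N_{P_n}(\{0\})=\tfrac{p}{1-p}+o(1)$ and with $\E N_{P_n}(\R)=\tfrac2\pi\log n+C_\xi+o(1)$ forces
\[
C_\xi=\frac{p}{1-p}+\sum_{j=1}^{4}C_{\xi,I_j},
\]
and the same computation on $[0,1]=\{0\}\sqcup(0,1)$ and $[-1,0]=\{0\}\sqcup(-1,0)$ gives $C_{\xi,[0,1]}=\tfrac{p}{1-p}+C_{\xi,(0,1)}$ and $C_{\xi,[-1,0]}=\tfrac{p}{1-p}+C_{\xi,(-1,0)}$. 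Thus everything reduces to bounding the bulk constants $C_{\xi,I_j}$ from below.

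Such a lower bound is exactly what \eqref{eq:uni} provides: it gives a constant $K=K(\ep_0,M_0)$, depending only on $\ep_0$ and the moment bound, with $C_{\xi,I}\ge-K$ for each of the relevant intervals $I$. (Alternatively one can route this through Corollary \ref{cor:C}: discard the nonnegative term $\E N_{P_\infty}((0,1-1/C))$ and use the stated uniformity of the convergence in $C$ to get $C_{\xi,(0,1)}\ge C_{\mathrm{Gau},(0,1]}-\tfrac1\pi\log 2-\delta$ with $\delta$ as small as one likes once a cutoff depending on $\ep_0,M_0$ is fixed.) Plugging this in,
\[
C_\xi\ge\frac{p}{1-p}-4K\big(\ep_0,M_0(p)\big),\qquad C_{\xi,[0,1]},\ C_{\xi,[-1,0]}\ge\frac{p}{1-p}-K\big(\ep_0,M_0(p)\big).
\]
Since the variance-one constraint forces $M_0(p)=(1-p)^{-\ep_0/2}$ to grow as $p\to1$, but only like a small power of $\tfrac{p}{1-p}\asymp(1-p)^{-1}$ when $\ep_0$ is small, one first fixes $\ep_0$ small enough (so that the $M_0$-growth of $K(\ep_0,\cdot)$ in \eqref{eq:uni} is slower than $M_0^{2/\ep_0}$), and then takes $p$ close enough to $1$ that the right-hand sides all exceed $M$; for this $\xi$, all three of $C_{\xi,[0,1]},C_{\xi,[-1,0]},C_\xi$ are then $\ge M$. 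The only point requiring care is this final balancing — making sure the unavoidable growth of the moment bound as $p\to1$ does not outpace the gain $p/(1-p)$ — and it is exactly why $\ep_0$ is taken small; the rest is the explicit three-point construction and the additivity of the root count.
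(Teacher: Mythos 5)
Your overall strategy is the same as the paper's proof in Section \ref{prof:nonuni}: place mass $p$ at the origin so that, by \eqref{eq:0}, the expected multiplicity at $0$ contributes $\tfrac{p}{1-p}$; bound the remaining contribution to $C_{\xi,[0,1]}$ from below by the universality estimate \eqref{eq:uni} (the paper does this via $\E N_{P_n}([0,1])\ge \E N_{P_n}(\{0\})+\E N_{P_n}([1-C^{-1},1])$ rather than your exact additivity identities, but that is cosmetic); then let $p\to 1$. The explicit three-point distribution and the bookkeeping $C_{\xi,[0,1]}=\tfrac{p}{1-p}+C_{\xi,(0,1]}$ are fine.

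The step that does not hold up is your final balancing. You correctly note that unit variance forces $M_0(p)=(1-p)^{-\ep_0/2}\to\infty$ as $p\to1$, so the constant $K(\ep_0,M_0(p))$ produced by \eqref{eq:uni} (i.e.\ by Theorem \ref{thm:universality}) is not fixed along your family. But your remedy --- ``fix $\ep_0$ small enough so that the $M_0$-growth of $K(\ep_0,\cdot)$ is slower than $M_0^{2/\ep_0}$'' --- is an assumption, not a deduction: Theorem \ref{thm:universality} is quoted with no rate of dependence of $C_0$ on $M_0$ or $\ep_0$, so no such comparison of growth rates is available from the paper, and there is no reason a smaller $\ep_0$ improves that constant. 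The parenthetical alternative via Corollary \ref{cor:C} does not repair this either: if you discard the nonnegative term $\E N_{P_\infty}\left(\left(0,1-\tfrac1C\right)\right)$, you are left with $-\tfrac{1}{2\pi}\log C$ at a cutoff $C=C(\ep_0,M_0)$, which is exactly the same $M_0$-dependent loss (the $-\tfrac{1}{2\pi}\log C$ in \eqref{eq:C} is offset precisely by the term you dropped), so no distribution-free lower bound on the bulk constant comes out of it. For comparison, the paper's proof does not attempt this balancing: it fixes $\ep=1/2$, derives for a given $\xi$ the bound $C_{\xi,[0,1]}\ge \tfrac{p}{1-p}-\tfrac{\log C}{2\pi}+C_{\mathrm{Gau},[0,1]}-\tfrac{1}{2\pi}\log 2-2\ep$, and asserts that $\tfrac{p}{1-p}$ dominates as $p\to1$, leaving the dependence of $C$ on $M_0(p)$ implicit; so the subtlety you flagged is genuine, but your proposed resolution cannot be justified from the results as stated --- it would require an effective (quantitative in $M_0$) version of Theorem \ref{thm:universality}.
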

For random variables with no mass at 0, tackling the second part of Conjecture \ref{conj} perhaps involves answering a more challenging question.
 \begin{question}
 	Can the constant $C_{\xi}$ be explicitly calculated from the moments of the random variable $\xi$?
 \end{question}
 We reiterate the following question from \cite{DoHoiNguyenVu}: even obtaining a practical bound on $C_{\pm 1}$, the constant corresponding to the Rademacher distribution, is already difficult—for instance, proving that $|C_{\pm 1}|\le 10$.

{\bf Organization.} In Section \ref{sec:sketch}, we provide a sketch of the proof of Theorems \ref{thm:mainthm1} and \ref{thm:mainthm}. The full proof will be carried out in Sections \ref{sec:uni}, \ref{sec:ConvFarFrom01}, and \ref{sec:ConvNear0}. 
Proof of Corollary \ref{cor:C} is presented in Section \ref{sec:cor}.
The proof of Theorem \ref{thm:cont} is in Section \ref{sec:cont}. Proof of Lemma \ref{lm:nonUniversality} can be found in Section \ref{prof:nonuni}.
\section{Proof skeleton of Theorems \ref{thm:mainthm1} and \ref{thm:mainthm}}\label{sec:sketch}
 Our method, after reducing to the non-universal interval $[0, 1-1/C]$, is completely different from that of \cite{DoHoiNguyenVu}. 
 
Firstly, we note that it suffices to show Equation \eqref{eq:I} for $I=[0, 1]$, namely
 \begin{equation}
 	\E N_{P_n}((0, 1]) = \dfrac{1}{2\pi}\log n + C_{\xi, (0, 1]} + o(1), \label{eq:01}
 \end{equation}
 
  Indeed, the number of roots of $P_n(x)$ in $[-1, 0)$ is the same as the number of roots in $(0, 1]$ of 
 $$P_n(-x) =\sum_{i=0}^{n} ((-1)^{i}\xi_i) x^i$$
 which is of the same form as $P_n$ and satisfies the same properties as $P_n$ (except that the random variables  $ ((-1)^{i}\xi_i) $ may not be identically distributed, in view of Theorem \ref{thm:mainthm1}. However, the proof would go similarly). Therefore, \eqref{eq:01} implies the same equation for $I = [-1, 0)$.
 
Moreover, Equation \eqref{eq:01} implies the same asymptotics for the interval $[0, 1]$, up to an additive constant thanks to \eqref{eq:0} which asserts that the expected multiplicity at $0$ converges (for non-i.i.d. case, to prove convergence of the series in \eqref{eq:0}, one can use the fact that the probability that the $\xi_i$ is 0 is bounded away from 1, see for instance \eqref{eq:small}). 

The number of roots of $P_n(x)$ in the interval $[1, \infty)$ is the same as the number of roots of $\hat P_n(x)$ in the interval $(0, 1]$ where
 \begin{equation}
 \hat P_n(x) = x^{n} P_n(1/x) = \sum_{i=0}^{n} \xi_{n-i} x^{i}.\notag
 \end{equation}
 Since the sequence $(\xi_{n-i})_{i=0}^{n}$ has the same distribution as  $(\xi_{i})_{i=0}^{n}$ under the i.i.d.  setting and satisfies the same properties as $(\xi_{i})_{i=0}^{n}$ under the non-i.i.d.  setting, Equation \eqref{eq:01} also holds for $\hat P_n$, and hence the same asymptotics applies to $N_{P_n}([1, \infty))$. Consequently, the result also extends to $N_{P_n}((-\infty, -1])$ by the transformation $x\mapsto -x$ as before.

 \vspace{5mm}
 
It follows that, for the rest of the paper, it suffices to prove \eqref{eq:01}. To this end, we further decompose the interval into
$$(0, 1]= [1-\delta, 1] \cup (1-\delta', 1-\delta) \cup (0, 1-\delta'] $$
where $\delta, \delta'$ are positive small constants. For any $\ep>0$, we need to show that there exists a constant $C$, depending only on $\ep_0, M_0$ and $\ep$, a constant $\delta$ that may depend additionally on the distribution of $\xi$, and for sufficiently large $n$, the following hold:
\begin{equation}
	\left |	 \E N_{P_n}([1-C^{-1}, 1]) - \left (\frac{1}{2\pi}\log\frac{n}{C}+ C_{\mathrm{Gau}, [0, 1]}- \frac{1}{2\pi}\log 2\right ) \right |\le \ep; \label{eq:uni}
\end{equation}
\begin{equation}
	\E N_{P_n}((1-\delta, 1-C^{-1})) \text{ converges as $n\to\infty$};\label{eq:far0}
\end{equation}
\begin{equation}
	\E N_{P_n}((0, 1-\delta])   \le \ep.\label{eq:near0}
\end{equation}

\vspace{5mm}
The proof of \eqref{eq:uni} is given in Section~\ref{sec:uni}, where we use universality to reduce to the Gaussian case. To prove \eqref{eq:far0}, writing $[\delta, 1 - C^{-1}] = [r, R]$, we first establish the weak convergence of $(P_n(z))_{z\in [r, R]}$ to $(P_\infty(z))_{z\in [r,R]}$ as random analytic functions. 
It follows that the number of real zeros of $P_n$ in $[r,R]$ converges in distribution to that of $P_\infty$, provided that $P_\infty$ has neither multiple roots nor zeros at the endpoints.
Finally, uniform integrability of these zero counts yields convergence in mean. 
The details are given in Section~\ref{sec:ConvFarFrom01}, where we emphasize the importance of showing that the infinite series $\sum_{k\ge 0} \xi_k x^k$ has no double roots. 
In Section~\ref{sec:shortTaylor}, we present a short proof of this fact in the case where $\xi$ has a symmetric distribution on $\{0,\pm 1\}$ with positive mass at~0. A short proof of \eqref{eq:near0} is presented in Section \ref{sec:ConvNear0}.

 \section{Proof of \eqref{eq:uni}: Real roots near 1}\label{sec:uni}

Note that Theorem \ref{thm:mainthm} still holds when all coefficients are i.i.d. $\sim \mathcal N(0, 1)$. When $x$ is close to $1$, \cite{nguyenvuCLT} (see also \cite{NNV16} for the iid case) established universality, as follows.

\begin{theorem}\cite[Corollary 2.2]{nguyenvuCLT}\label{thm:universality}
Under the assumption of Theorem \ref{thm:mainthm}, for all $\ep>0$, there exists a constant $C_0$, depending only on $\ep, \ep_0$ and $M_0$ such that for all $n$ sufficiently large (dependending only on $\ep, \ep_0$ and $M_0$), it holds that 
\begin{equation*} 
   \left |\E N_{P_n}([1 - 1/C_0, 1]) - \E N_{P_n^g}([1 - 1/C_0, 1])\right| \le \ep.
\end{equation*}
\end{theorem}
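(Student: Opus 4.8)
\textbf{Proof proposal for Theorem \ref{thm:universality}.}

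The statement to be proved is the universality comparison
\[
\left|\E N_{P_n}([1-1/C_0,1]) - \E N_{P_n^g}([1-1/C_0,1])\right| \le \ep,
\]
cited from \cite[Corollary 2.2]{nguyenvuCLT}. Since the paper invokes this as an external input, the role here is to indicate how such a comparison is established within the universality framework initiated by Tao--Vu and refined for roots near $1$. The plan is to pass from counting real zeros to a smoothed local statistic, apply a replacement (Lindeberg-type swapping) argument coefficient by coefficient, and control the resulting error using the finite $(2+\ep_0)$-moment hypothesis together with anticoncentration for $P_n$ and $P_n'$ on the relevant scale.

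First I would express $\E N_{P_n}([1-1/C_0,1])$ through the Kac--Rice formula as an integral over $t \in [1-1/C_0,1]$ of $\E\bigl[|P_n'(t)| \,\delta_0(P_n(t))\bigr]$, and then regularize the Dirac mass by a smooth bump $\eta_\tau$ at scale $\tau$. The key geometric fact is that near $t=1$, after the change of variables $t = 1 - s/n$, the processes $(P_n(1-s/n), \tfrac1n P_n'(1-s/n))$ converge — in the Gaussian case and, by the CLT machinery of \cite{nguyenvuCLT}, in the general case — to an explicit limiting Gaussian process on the scale $s \asymp C_0$. So the contribution of the window $[1-1/C_0,1]$ is captured by a bounded number (in terms of $C_0$) of ``local'' functionals of the coefficient vector $(\xi_0,\dots,\xi_n)$, each depending smoothly on finitely many linear statistics $\sum_k \xi_k (1-s/n)^k$ and $\sum_k k \xi_k (1-s/n)^{k-1}$.

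Next I would run the swapping argument: replace $\xi_k$ by a Gaussian $\xi_k^g$ one index at a time, and bound each single-swap error by a third-order Taylor term of the form $\E|\xi_k|^{2+\ep_0}$ times a bound on the third derivative of the smoothed functional in the $k$-th coordinate direction. Summing over $k$, the total error is of order $M_0$ times $\sum_k w_k^{2+\ep_0}$, where $w_k = (1-s/n)^k$ is the effective weight of the $k$-th coefficient; since these weights decay geometrically on the scale $n/C_0$ away from $k \approx n$, this sum is $O(C_0^{-\ep_0/2})$ or similar, hence smaller than $\ep$ once $C_0 = C_0(\ep,\ep_0,M_0)$ is large. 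The regularization error from replacing $\delta_0$ by $\eta_\tau$ is handled by an anticoncentration bound: $\P(|P_n(t)| \le \tau)$ and the analogous small-ball estimate for the pair $(P_n(t), P_n'(t))$ are $O(\tau)$ uniformly in $t \in [1-1/C_0,1]$ and in $n$, which follows from the $(2+\ep_0)$-moment assumption via a Berry--Esseen-type inequality for the dominant linear combinations. Choosing $\tau$ small relative to $\ep$ and $C_0$ closes the estimate; the dependence of the threshold ``$n$ sufficiently large'' on only $\ep,\ep_0,M_0$ comes from the quantitative CLT rate.

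The main obstacle I expect is the interplay between the two scales. On one hand $\tau$ must be small so that the smoothed count approximates the true real-root count; on the other hand, making $\tau$ small inflates the operator norms of the third derivatives of $\eta_\tau$ in the swapping step, so one needs the cancellation in $\sum_k w_k^{2+\ep_0}$ and the anticoncentration to be genuinely uniform and strong enough to absorb negative powers of $\tau$. Managing this requires a careful choice of the bump function (e.g., with controlled $C^3$ norm at scale $\tau$), and a bound on the density of $P_n(t)$ that is uniform on the whole window rather than pointwise — the latter is the technical heart and is where the structure of Kac polynomials near $t=1$ (the long, slowly-varying tail of weights) is genuinely used. The remaining pieces — the Kac--Rice representation, the Lindeberg swap, and the limiting Gaussian computation — are by now standard once this uniform local control is in place.
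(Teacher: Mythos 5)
The paper does not prove this statement at all: it is imported verbatim as \cite[Corollary 2.2]{nguyenvuCLT}, so there is no internal proof to match your sketch against. Your proposal is therefore best judged as a standalone roadmap, and as such it correctly identifies the high-level philosophy of the cited work (rescaling $t=1-s/n$, Lindeberg-type coefficient swapping with a third-order Taylor term controlled by the $(2+\ep_0)$-moment, and summing the per-coefficient errors against the geometrically decaying weights $t^k$). The accounting you do for the swapping error is essentially the right shape.

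However, the specific route you propose --- Kac--Rice with a smoothed Dirac mass $\eta_\tau$, closed by the claim that $\P(|P_n(t)|\le \tau)$ and the small-ball probability of $(P_n(t),P_n'(t))$ are $O(\tau)$ uniformly --- has a genuine gap precisely in the generality this theorem must cover. For discrete coefficients (Rademacher, for instance, which is one of the main motivations of the paper), $P_n(t)$ has no density and the best available anticoncentration is of the form $O(\tau/\sigma)+O(n^{-1/2})$: there is an irreducible atom-scale floor, e.g.\ $\P(P_n(1)=0)\asymp n^{-1/2}$, which does not vanish as $\tau\to 0$. Consequently the regularization error in replacing $\delta_0$ by $\eta_\tau$ cannot be made small by shrinking $\tau$, and the two-scale tension you flag as ``the main obstacle'' is not merely delicate but fatal for this route. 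This is exactly why the universality literature (Tao--Vu and the cited Nguyen--Vu corollary) avoids the joint density of $(P_n,P_n')$ altogether: real roots in a window are counted via complex-analytic functionals of $\log|P_n|$ (Jensen/Green's function and correlation functions tested against smooth functions), the Lindeberg swap is performed on those smooth log-magnitude functionals, and a separate argument shows that complex zeros clustering near the real axis are genuinely real. If you want a self-contained proof rather than the citation, you would need to replace the smoothed Kac--Rice step with that complex-zero counting mechanism; the rest of your outline would then carry over.
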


 Next, we evaluate the mean number of real roots for the Gaussian setting. By the Kac-Rice formula, it is well-known (see example, \cite{EdelmanKostlan}) that
\begin{eqnarray*}
	\E N_{P_n^g}([1 - 1/C_0, 1])&=& \E N_{P_n^g}([0, 1]) - \E N_{P_n^g}([0, 1-1/C_0)\\ 
&=&  \frac{1}{2\pi}\log n+ C_{\mathrm{Gau}, [0, 1]} + o(1)	- \int_{0}^{1-C_0^{-1}}
	\frac{1}{\pi}
	\sqrt{ \frac{1}{(t^{2}-1)^{2}} 
		- \frac{(n+1)^{2} t^{2n}}{(t^{2n+2}-1)^{2}} } \, dt.
\end{eqnarray*}
 For $t\in [0, 1-1/C_0)$, it is not hard to see that
 $$\sqrt{ \frac{1}{(t^{2}-1)^{2}} 
 	- \frac{(n+1)^{2} t^{2n}}{(t^{2n+2}-1)^{2}} } = \frac{1}{1-t^{2}} + o(1) $$
 which makes
\begin{eqnarray*}
	\frac1\pi\int_{0}^{1-C_0^{-1}} 
	\sqrt{ \frac{1}{(t^{2}-1)^{2}} 
		- \frac{(n+1)^{2} t^{2n}}{(t^{2n+2}-1)^{2}} } \, dt &=&  \frac{1}{2\pi}\log\frac{1+t}{1-t}\bigg\vert _{0}^{1-C_0^{-1}} + o(1)\\
		&=&  \frac{1}{2\pi}\log(2-C_0^{-1})+\frac{1}{2\pi} \log C_0+ o(1).
\end{eqnarray*}
Therefore, 
\begin{eqnarray*}
	\E N_{P_n^g}([1 - 1/C_0, 1]) &=&  \frac{1}{2\pi}\log n-\frac{1}{2\pi}\log C_0+ C_{\mathrm{Gau}, [0, 1]}- \frac{1}{2\pi}\log 2  + R_{C_0}.
\end{eqnarray*}
where $|R_{C_0}|\le \ep$ for sufficiently large  $n$.  

\section{Proof of \eqref{eq:far0}: Roots away from 0 and 1}\label{sec:ConvFarFrom01}
Let $P_{\infty}$ be the following random Taylor series 
$$P_{\infty}(z) := \sum_{k = 0}^{\infty} \xi_k z^k.$$
The main objective of this section is to prove \eqref{eq:far0} which we restate in the following lemma. 
\begin{lemma}\label{lem:ConvFarFrom01}
    Fix any fixed $r<R$ in $ (0, 1)$, we have
    $$\lim_{n \rightarrow \infty} \E N_{P_n}([r, R]) = \E N_{P_{\infty}}([r, R]) < \infty.$$
\end{lemma}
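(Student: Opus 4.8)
The plan is to establish convergence in distribution of the zero counts, then upgrade to convergence in mean via uniform integrability. First I would show that the random analytic functions $P_n$ converge weakly to $P_\infty$ in the space of analytic functions on a disc of radius $R' \in (R,1)$, equipped with the topology of locally uniform convergence. Since $r<R<1$, the coefficients $\xi_k$ have uniformly bounded second moments, so $\sum_k \xi_k z^k$ converges almost surely and in $L^2$ on $|z|\le R'$; the partial sums $P_n$ form a tight family in the analytic-function space (one controls $\sup_{|z|\le R'}|P_n(z)|$ via the bounded increments and a maximal inequality, or directly via Cauchy estimates on the coefficients), and the finite-dimensional distributions $(P_n(z_1),\dots,P_n(z_m))$ converge to those of $P_\infty$ by the Lindeberg CLT (using the $(2+\ep_0)$-moment bound). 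Tightness plus convergence of finite-dimensional marginals gives $P_n \Rightarrow P_\infty$.

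Next I would invoke the standard fact that, on the event that $P_\infty$ has no zeros on the boundary $\{r,R\}$ and no multiple zeros in $[r,R]$, the map $f \mapsto N_f([r,R])$ is continuous at $f = P_\infty$ in the locally-uniform topology (a simple zero is stable under small perturbations by the argument principle / Rouché, and finitely many of them lie in the compact interval). Therefore $N_{P_n}([r,R]) \Rightarrow N_{P_\infty}([r,R])$, provided one verifies the two genericity hypotheses: that $P_\infty(r)\ne 0$ and $P_\infty(R)\ne 0$ almost surely, and — the crucial point — that $P_\infty$ almost surely has no double root in $(0,1)$. The endpoint condition is easy if $\xi$ has no atom forcing $P_\infty(r)=0$; the no-double-root statement is flagged in the excerpt as the main subtlety, to be handled in Section~\ref{sec:shortTaylor} (and in general by an anti-concentration / absolute-continuity argument for the pair $(P_\infty(t),P_\infty'(t))$, exploiting that only finitely many coefficients are "visible" near a given $t\in(0,1)$ while the tail provides a genuinely spread-out perturbation).

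Finally, to pass from convergence in distribution to $\lim_n \E N_{P_n}([r,R]) = \E N_{P_\infty}([r,R])$, I would show that the family $\{N_{P_n}([r,R])\}_n$ is uniformly integrable; in fact it suffices to bound $\sup_n \E\, N_{P_n}([r,R])^{1+\eta}$ for some $\eta>0$, or even just to produce a uniform $\E\, N_{P_n}([r,R]) \le K(r,R)$ together with a uniform bound on a slightly higher moment. One clean route is Jensen's formula / the argument principle: $N_{P_n}([r,R])$ is dominated by the number of zeros of $P_n$ in a fixed disc $\{|z|\le R'\}$ with $R<R'<1$, which is controlled by $\log\big(\sup_{|z|=R''}|P_n(z)| / |P_n(0)|\big)$ for $R'<R''<1$; the numerator has bounded moments of all orders up to $2$ (hence after taking logs, moments of all orders), and $|P_n(0)|=|\xi_0|$ is bounded below in probability away from $0$ — handling the atom at $0$ requires the hypothesis that $\P(\xi_0=0)$ is bounded away from $1$, exactly as noted around \eqref{eq:small}. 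This gives $\sup_n \E\, N_{P_n}([r,R])^2 < \infty$, hence uniform integrability, hence convergence of the means; the finiteness of $\E N_{P_\infty}([r,R])$ follows from the same bound applied to $P_\infty$ (or by Fatou). I expect the main obstacle to be the no-double-root property of $P_\infty$, since it is precisely the input that makes the zero-counting functional continuous and it has no routine proof in the general (possibly atomic) setting.
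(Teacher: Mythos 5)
Your three-step outline (weak convergence of $P_n$ to $P_\infty$ as random analytic functions, continuity of the zero-counting map away from multiple zeros and endpoint zeros, then uniform integrability) is exactly the structure of the paper's proof, and your treatment of the no-double-root issue as an external input (Section~\ref{sec:shortTaylor} in the special case, Michelen--Yakir in general, cf.\ Lemma~\ref{lm:dbr}) matches the paper. However, two steps as you describe them have genuine problems.

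First, the finite-dimensional convergence is not a central limit theorem statement and the Lindeberg CLT is the wrong tool: $P_n$ and $P_\infty$ are built from the \emph{same} coefficients $\xi_k$, the limit $P_\infty(z)$ is not Gaussian, and what is needed is simply that the tail $\sum_{j>n}\xi_j z^j$ tends to $0$; the paper proves this by a one-line $L^2$ estimate on the tail, uniformly over $z_1,\dots,z_d$ in the disc. Your conclusion is true, but the cited mechanism would not prove it. Second, and more seriously, your uniform integrability argument via Jensen's inequality \eqref{eq:Jensen} with denominator $|P_n(0)|=|\xi_0|$ breaks down precisely in the cases this paper is designed to cover. If $\P(\xi_0=0)>0$ (e.g.\ $\xi$ uniform on $\{-1,0,1\}$), then on that event the bound $N\le \log(\sup|P_n|/|\xi_0|)/\log(R''/R')$ is vacuous, and even when $\xi_0\neq 0$ but is small, $\E\,|\log|\xi_0||^{2}$ need not be finite for a general unit-variance law; saying that $|\xi_0|$ is ``bounded below in probability away from $0$'' does not convert into a bound on $\E N_{P_n}([r,R])^2$. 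The paper's Lemma~\ref{lem:momentsOfNumRoots} repairs this by fixing a threshold $d$ as in \eqref{eq:small}, conditioning on the first index $k$ with $|\xi_k|>d$ (an event of probability at most $q^k$), applying Rolle to reduce $N_{P_n}$ to $k+N_{P_n^{(k)}}$, and then running Jensen for the $k$-th derivative together with a careful moment bound on $\log S_k$ (splitting according to the size of $S_k$ relative to $\E S_k$). Without some device of this kind your sketch does not yield $\sup_n \E N_{P_n}([r,R])^2<\infty$ in the atomic setting, which is the key new case of the theorem.
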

To prove the limit, we follow the general strategy in a paper by Iksanov--Kabluchko--Marynych \cite{Iksanov2016}. We first show that  $(P_n(z))_{z \in [r, R]} $ converges weakly  to $(P_{\infty}(z))_{z \in [r, R]}$ as random analytic functions. A standard argument applies to yield that the number of real zeros in $[r, R]$ of $P_n$ converges weakly to that of $P_\infty$, assuming that $P_{\infty}$ does not have double roots. Showing that the probability of $P_{\infty}$ having no double roots is zero is a challenging problem, which was only recently resolved in a beautiful paper by Michelen--Yakir~\cite{MichelenYakir2025}. (In Section~\ref{sec:shortTaylor}, we present an elegant alternative proof in the case where $\xi$ is a random variable taking values in $\{-1,0,1\}$ with probabilities $\tfrac{1-q}{2}, q, \tfrac{1-q}{2}$, respectively, for some $q>0$. For example, this includes the uniform distribution on $\{-1,0,1\}$.We note that even this case was not covered in the previous work by Do--Nguyen--Vu \cite{DoHoiNguyenVu}.) Finally, we prove uniform integrability of these numbers of real roots to get convergence in mean. These steps will be done, respectively, in Sections \ref{sec:functional}, \ref{sec:distribution}, \ref{sec:mean}.
The fact that the limit is bounded is a direct corrolary of Lemma \ref{lem:momentsOfNumRoots} presented in Section \ref{sec:mean}.

\subsection{Functional limit theorem}\label{sec:functional} For any $0 < R < 1$, consider the half-disk
\begin{equation*}
    D_{R} := \{z \in \mathbb{C}: |z| \le R\}.
\end{equation*}
Let $\mathcal{H}(D_{R})$ be the Banach space of complex-valued function continuous on $D_{R}$ and analytic in the interior of $D_{R}$, endowed with the supremum norm. Let 
\begin{equation*}
    \mathcal{H}_{\mathbb{R}}(D_{R}) := \{f \in \mathcal{H}(D_R): f(\mathbb{R} \cap D_{R}) \subseteq \mathbb{R} \},
\end{equation*}
i.e. the closed subset of $\mathcal{H}(D_{R})$ taking real values on $\mathbb{R} \cap D_{R}$. We first show the following functional limit theorem.

\begin{lemma}\label{lem:FunctionalLimitTheorem}
    Weakly on the space $\mathcal{H}_{\mathbb{R}}(D_{R})$, we have, as $n \rightarrow \infty$,
    \begin{equation*}
        (P_n(z))_{z \in D_{R}} \xrightarrow[]{w} (P_{\infty}(z))_{z \in D_{R}}.
    \end{equation*}
    Here, $\xrightarrow[]{w} $  denotes weak convergence of random elements taking values in a metric space.
\end{lemma}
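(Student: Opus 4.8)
The plan is to prove weak convergence in $\mathcal{H}_{\mathbb{R}}(D_R)$ by combining finite-dimensional convergence with tightness, which is the standard route (via Prokhorov's theorem) for weak convergence of random elements in a Polish space. Since $\mathcal{H}_{\mathbb{R}}(D_R)$ is a closed subspace of the separable Banach space $\mathcal{H}(D_R)$, and since each $P_n$ and $P_\infty$ clearly takes real values on $\mathbb{R}\cap D_R$, it suffices to establish convergence on $\mathcal{H}(D_R)$.

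\textbf{Step 1: Convergence of finite-dimensional distributions.} Fix $z_1,\dots,z_m\in D_R$. I would show that $(P_n(z_1),\dots,P_n(z_m))\to(P_\infty(z_1),\dots,P_\infty(z_m))$ in distribution, equivalently (viewing $\mathbb{C}\cong\mathbb{R}^2$) convergence of the real random vectors. Writing $P_n(z_j)=\sum_{k=0}^n \xi_k z_j^k$, each coordinate is a sum of independent terms $\xi_k(z_1^k,\dots,z_m^k)$; since $|z_j|\le R<1$, the tail $\sum_{k>n}\xi_k z_j^k$ has variance $O(R^{2n})\to 0$, so $P_n(z_j)$ and $P_\infty(z_j)$ differ by an $L^2$-negligible amount. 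Hence the claim reduces to the statement that $P_\infty(z_j)$ is a well-defined random variable with $\E|P_\infty(z_j)|^2<\infty$ and that the truncations converge to it in $L^2$ — both immediate from $\sum_k R^{2k}<\infty$ and the unit-variance, zero-mean, independence hypotheses. Actually convergence here is even stronger: $P_n(z_j)\to P_\infty(z_j)$ in $L^2$ jointly, so finite-dimensional distributions converge trivially, and no CLT is needed because the limit is not Gaussian — it is the genuine random series.

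\textbf{Step 2: Tightness in $\mathcal{H}(D_R)$.} By standard criteria for tightness of random analytic functions (as used in Iksanov--Kabluchko--Marynych \cite{Iksanov2016}), it suffices to bound $\E\big[\sup_{z\in D_R}|P_n(z)|^2\big]$ uniformly in $n$, or even just to control a single moment on a slightly larger disk: pick $R<R'<1$ and note $\sup_{|z|\le R}|P_n(z)|\le \sup_{|z|=R'}|P_n(z)|$ by the maximum principle, then estimate $\E\sup_{|z|=R'}|P_n(z)|$. A clean way is to bound $\E|P_n(z)|^2=\sum_{k\le n}|z|^{2k}\le \frac{1}{1-R'^2}$ pointwise on $|z|=R'$ and upgrade to a uniform bound via a chaining/covering argument together with analyticity (Cauchy estimates on coefficients give equicontinuity). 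This yields a uniform bound $M=M(R,R',M_0)$ with $\E\sup_{|z|\le R}|P_n(z)|\le M$; since bounded sets in $\mathcal{H}(D_R)$ that are also equicontinuous are precompact (Montel/Arzelà--Ascoli), Markov's inequality gives tightness. In the non-i.i.d. setting the same bounds go through verbatim because only $\E\xi_i=0$, $\Var\xi_i=1$ are used here.

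\textbf{Step 3: Conclusion.} Finite-dimensional convergence plus tightness gives, by Prokhorov's theorem, that every subsequential weak limit of $(P_n)$ has the same finite-dimensional distributions as $P_\infty$; since finite-dimensional distributions determine the law on $\mathcal{H}(D_R)$ (point evaluations separate points and the cylinder $\sigma$-algebra coincides with the Borel one on this separable metric space), the full sequence converges weakly to $P_\infty$, and the limit lives in the closed set $\mathcal{H}_{\mathbb{R}}(D_R)$. The main obstacle is Step 2: verifying tightness requires a genuinely uniform (in $n$) moment bound on the sup norm over the half-disk, and doing this cleanly — rather than pointwise — needs either a maximal inequality for the analytic functions $P_n$ or an argument that transfers pointwise $L^2$-control to uniform control using Cauchy's integral formula on the annulus $R\le|z|\le R'$. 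Everything else is routine, and in particular no central limit theorem or anticoncentration input is needed, which is precisely why the hypotheses can be so weak.
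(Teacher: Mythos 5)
Your proposal is correct and follows essentially the same route as the paper: Prokhorov's theorem, finite-dimensional convergence established via the $L^2$ bound on the tail $\sum_{k>n}\xi_k z^k$ (using only mean zero, unit variance, and $|z|\le R<1$), and tightness from second-moment control. The one place you diverge is tightness: you propose to prove a uniform-in-$n$ bound on $\E\sup_{z\in D_R}|P_n(z)|$ (or its square) by hand via maximum principle, chaining, and Montel/Arzel\`a--Ascoli, and you flag this as the ``main obstacle.'' The paper sidesteps this entirely by invoking a ready-made criterion (\cite[Lemma 4.2]{KK14}) under which tightness in $\mathcal H_{\R}(D_R)$ follows from the pointwise bound $\sup_{n,\,z\in D_R}\E|P_n(z)|^2\le (1-R^2)^{-1}$, i.e.\ exactly the estimate you already computed; no sup-norm maximal inequality is needed. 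Even if you insist on doing it by hand, no chaining is required: for $R<R'<1$, Cauchy's integral formula on $|w|=R'$ gives $\sup_{|z|\le R}|P_n(z)|\le \frac{R'}{R'-R}\sup_{|w|=R'}|P_n(w)|$ only after a further step, but more directly $\E\sup_{|z|\le R}|P_n(z)|\le \frac{R'}{R'-R}\,\sup_{|w|=R'}\bigl(\E|P_n(w)|^2\bigr)^{1/2}$ by averaging the Cauchy kernel bound over the circle, which combined with Montel's theorem and Markov's inequality yields tightness. So your outline is sound; the only gap is that the step you identify as the main difficulty is left as a sketch, and it is precisely the step the paper resolves with a single citation.
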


\begin{proof} 
By Prokhorov’s theorem, it suffices to verify convergence in finite dimensional and tightness.
	
	\textbf{Convergence of finite-dimensional distributions. }
    We show that for all $d \in \mathbb{N}$ and $z_1, \dots, z_d \in D_{R}$, as $n \rightarrow \infty$.
    \begin{equation*}
        \begin{pmatrix}
            \text{Re } P_n(z_1) \\
            \text{Im } P_n(z_1) \\
            \vdots \\
            \text{Re } P_n(z_d) \\
            \text{Im } P_n(z_d)
        \end{pmatrix} \xrightarrow[]{d} \begin{pmatrix}
            \text{Re } P_{\infty}(z_1) \\
            \text{Im } P_{\infty}(z_1) \\
            \vdots \\
            \text{Re } P_{\infty}(z_d) \\
            \text{Im } P_{\infty}(z_d)
        \end{pmatrix}.
    \end{equation*}

    Even better, we will show that the convergence above is in $L^2$:
    \begin{align}
        &\ \sum_{k=1}^d \left((\text{Re } P_n(z_k) - \text{Re } P_{\infty}(z_k))^2 + (\text{Im } P_n(z_k) - \text{Im } P_{\infty}(z_k))^2\right) \nonumber \\
        = & \ \sum_{k=1}^d  \left| \sum_{j > n} \xi_j z_k^j \right|^2 \le \sum_{k = 1}^d \left( \sum_{j > n} |\xi_j| \ |z_k|^j \right)^2 \le \sum_{k = 1}^d \left( \sum_{j > n} |\xi_j| \ R^j \right)^2 = d \left( \sum_{j > n} |\xi_j| \ R^j \right)^2 \nonumber \\
        = & \ d\sum_{j \ge 2(n+1)} \left( \sum_{u = n+1}^{j - (n+1)} |\xi_u| \ |\xi_{j-u}| \right) R^j, \label{eq:BoundForFinDimConv}
    \end{align}
    Note that
    \begin{equation*}
        \E |\xi_u| \ |\xi_{j-u}| \le (\E \xi_u^2)^{1/2} (\E \xi_{j-u}^2)^{1/2} = 1,
    \end{equation*}
    so taking the expectation of both sides of \eqref{eq:BoundForFinDimConv} gives
    \begin{align*}
        &\ \E\left(\sum_{k=1}^d \left((\text{Re} P_n(z_k) - \text{Re} P_{\infty}(z_k))^2 + (\text{Im} P_n(z_k) - \text{Im} P_{\infty}(z_k))^2\right)\right) \\
        \le &\ d \sum_{j \ge 2(n+1)} (j - 2(n+1))R^j = d\left( \sum_{j \ge 0} j R^j\right) R^{2(n+1)} \longrightarrow 0,
    \end{align*}
    as $n \rightarrow \infty$. This establishes the desired $L^2$ convergence.

 \vspace{5mm}
 \textbf{Tightness.}   It remains to show that $(P_n(z))_{z \in D_{R}}$ is a tight sequence on $\mathcal{H}_{\mathbb{R}}(D_{R})$. A standard sufficient condition for tightness (see for example, \cite[Lemma 4.2]{KK14}) is  
 $$\sup_{n \in \mathbb{N}, z \in D_{R}} \E |P_n(z)|^2 < \infty.$$ 
 Since
    \begin{equation*}
        \E|P_n(z)|^2 = \E(P_n(z) \overline{P_n(z)}) = \E\sum_{k = 0}^{n} |z|^{2k} \le \sum_{k = 0}^{n} R^{2k} \le \dfrac{1}{1 - R^2} < \infty,
    \end{equation*}
    we obtained the desired tightness. This also completes the proof of Lemma \ref{lem:FunctionalLimitTheorem}.
\end{proof}

\subsection{Convergence in distribution of the number of real roots}\label{sec:distribution}
From the convergence of $P_n$, we now show that $N_{P_{n}}([r, R]) \xrightarrow[]{d} N_{P_{\infty}}([r, R])$. We state the following analogue of \cite[Lemma 4.1]{Iksanov2016}, whose proof carries over almost verbatim. Roughly speaking, the lemma asserts that the mapping $P \mapsto N_{P}[r, R]$ is continuous, assuming that $P_{\infty}$ does not have double roots nor roots at the endpoints.   The result is a consequence of Hurwitz's Theorem (see for example, \cite[Page 152]{conway2012functions}). In essence, Hurwitz’s theorem ensures the continuity of the map that associates to an analytic function the point process formed by its complex zeros. 
When focusing on real zeros, one must additionally verify that these remain confined to the real line under small perturbations. 
This stability holds for analytic functions that are real-valued on $\mathbb{R}$ (as in the case of $P_n$ and $P_{\infty}$): non-real zeros necessarily occur in complex conjugate pairs, and a simple real zero cannot leave the real axis under a sufficiently small perturbation of the function. This explains the requirement that $P_{\infty}$ have neither multiple roots nor zeros at the endpoints.
\begin{lemma}\label{lem:IksanovLemma41}
	Let $A = A[a, b] \subseteq \mathcal H_{\mathbb{R}}(D_R)$ be the set of all $f \in \mathcal H_{\R}(D_R)$ which do not have multiple real zeros in $[a, b]$ and satisfy $f(a),f(b) \neq 0$. Then the set $A$ is open.
	
	Moreover, let $N = N_f([a, b])$ be the number of real zeros of $f$ in $[a, b]$. Then the map $N: \mathcal H_{\R}(D_R) \rightarrow \N$ is locally constant on A (i.e. for every $f \in A$ there is an open neighborhood of $f$ in $\mathcal H_{\R}(D_R)$ on which $N$ is constant), and hence, is continuous.
\end{lemma}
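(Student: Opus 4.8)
\textbf{Proof proposal for Lemma \ref{lem:IksanovLemma41}.}

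The plan is to prove both claims at once by producing, for each $f \in A$, an explicit radius $\rho > 0$ such that every $g \in \mathcal{H}_{\mathbb{R}}(D_R)$ with $\norm{g - f} < \rho$ also lies in $A$ and satisfies $N_g([a,b]) = N_f([a,b])$; this simultaneously shows $A$ is open and that $N$ is locally constant, and local constancy trivially implies continuity into the discrete space $\N$. Fix $f \in A$. Since $f$ is analytic and not identically zero, its zeros in the compact set $D_R$ are isolated, hence finite in number; list the real zeros in $[a,b]$ as $x_1 < \dots < x_N$, each simple. The first step is to choose disjoint closed disks $\overline{B(x_i, r_i)} \subseteq D_R$, one around each $x_i$, small enough that $f$ has no zero in $\overline{B(x_i,r_i)}$ other than $x_i$ itself, and such that none of these disks meets any complex zero of $f$. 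Let $K$ be the compact set obtained from $D_R$ by deleting the open disks $B(x_i, r_i/2)$ together with a fixed open neighborhood of the (finitely many) complex zeros of $f$ that lies strictly outside $\bigcup_i \overline{B(x_i,r_i)}$ — arranged so that $[a,b] \setminus \bigcup_i B(x_i, r_i/2)$ is still contained in $K$; then $|f| \ge m > 0$ on $K$ and on the circles $\partial B(x_i, r_i)$, and also $|f(a)|, |f(b)| > 0$.

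The second step is the perturbation argument. Set $\rho := \tfrac12 \min\big(m, |f(a)|, |f(b)|\big)$ and take $g \in \mathcal{H}_{\mathbb{R}}(D_R)$ with $\norm{g-f} < \rho$. On each circle $\partial B(x_i, r_i)$ we have $|g - f| < \rho \le |f|$, so by Rouché's theorem $g$ has exactly one zero (counted with multiplicity) inside $B(x_i, r_i)$; call it $y_i$. Because $g$ is real-valued on $\mathbb{R} \cap D_R$, its non-real zeros come in conjugate pairs, so a zero of odd multiplicity — in particular a simple zero — lying in a disk $B(x_i,r_i)$ that is symmetric about the real axis must itself be real (if $y_i$ were non-real, $\overline{y_i}$ would be a second zero in the same disk, contradicting the count $1$); hence $y_i \in [a,b]$ after possibly shrinking the $r_i$ at the outset so that $B(x_i,r_i) \cap \mathbb{R} \subseteq [a,b]$. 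Moreover $g$ has no zeros on $K \supseteq [a,b] \setminus \bigcup_i B(x_i, r_i/2)$, since there $|g| \ge |f| - |g-f| > m - \rho \ge m/2 > 0$. Therefore the real zeros of $g$ in $[a,b]$ are exactly $y_1, \dots, y_N$, each simple, and $g(a), g(b) \neq 0$ because $|g(a) - f(a)| < \rho < |f(a)|$ and likewise at $b$. This gives $g \in A$ and $N_g([a,b]) = N = N_f([a,b])$.

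The only genuine subtlety — and the step that needs a little care rather than routine bookkeeping — is the case distinction at the endpoints and the possibility that a real zero of $f$ splits, under perturbation, into several zeros within one disk. The simplicity hypothesis is exactly what rules this out: Rouché gives \emph{multiplicity one} inside each $B(x_i,r_i)$, which forbids both a split into two real zeros and a split into a conjugate pair, and it is precisely here that "no multiple real zeros" and "$f(a), f(b) \neq 0$" are used; a double real zero of $f$ could perturb to two nearby real zeros, to a conjugate pair, or to none, so $N$ would not be locally constant there. Since the argument is verbatim the one in \cite[Lemma 4.1]{Iksanov2016} once one observes that $P_n, P_\infty \in \mathcal{H}_{\mathbb{R}}(D_R)$, I would simply cite that reference for the remaining details after recording the Rouché/conjugate-pair observation above.
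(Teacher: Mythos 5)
Your proof is correct and follows essentially the same route as the paper, which gives no detailed argument of its own but cites Iksanov--Kabluchko--Marynych (Lemma 4.1) and sketches exactly the Hurwitz/Rouch\'e-plus-conjugate-pair stability reasoning that you spell out explicitly. The only (harmless) slip is the claim that $f$ has finitely many zeros in all of $D_R$ --- zeros may accumulate at the boundary circle, where $f$ is merely continuous, not analytic --- but your argument only needs $|f|$ bounded below on the compact set $[a,b]\setminus\bigcup_i B(x_i,r_i/2)$ and on the circles $\partial B(x_i,r_i)$, which holds regardless.
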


To conclude that $N_{P_{n}}([r, R]) \xrightarrow[]{d} N_{P_{\infty}}([r, R])$, it remains to show that almost surely, $P_\infty$ does not have double roots and $P_{\infty}(r) \neq 0$, $P_{\infty}(R) \neq 0$. 
As mentioned before, the first part follows from  \cite{MichelenYakir2025}.
\begin{lemma}\cite[Theorem 1.3]{MichelenYakir2025}\label{lm:dbr}
	Suppose $\xi$ is such that $\E\log(1 + |\xi|) < \infty$. Then $$\P(\exists z \in B(0, 1): P_{\infty}(z) = P'_{\infty}(z) = 0) = \P(P_{\infty}(0) = P'_{\infty}(0) = 0) = \P(\xi = 0)^2.$$
	
	Consequently, 
	$$\P(\exists x \in [r, R]: P_{\infty}(x) = P'_{\infty}(x) = 0) = 0.$$
\end{lemma}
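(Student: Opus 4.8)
The lemma has two parts: the equality $\P(\exists z\in B(0,1):P_\infty(z)=P_\infty'(z)=0)=\P(\xi=0)^2$, which is quoted verbatim from \cite{MichelenYakir2025}, and the consequence that $P_\infty$ almost surely has no double root in the fixed interval $[r,R]\subset(0,1)$. Since the first part is cited, the plan is to derive the second from it. The key observation is that $[r,R]$ is a compact subset of the open disk $B(0,1)$, so any double root of $P_\infty$ in $[r,R]$ is in particular a double root in $B(0,1)$. Hence
\[
\P(\exists x\in[r,R]:P_\infty(x)=P_\infty'(x)=0)\le \P(\exists z\in B(0,1):P_\infty(z)=P_\infty'(z)=0)=\P(\xi=0)^2,
\]
and this upper bound is generally positive, so on its own it does not give the claimed value $0$. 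The point is that the event $\{\exists z\in B(0,1):P_\infty(z)=P_\infty'(z)=0\}$ coincides, up to a null set, with the event $\{P_\infty(0)=P_\infty'(0)=0\}=\{\xi_0=\xi_1=0\}$: by the displayed chain of equalities in the first part, the only way $P_\infty$ acquires a double root in the disk is at the origin. Thus almost surely, \emph{if} $P_\infty$ has a double root in $B(0,1)$, that double root is located at $z=0$, which is \emph{not} in $[r,R]$ because $r>0$. Therefore $\P(\exists x\in[r,R]:P_\infty(x)=P_\infty'(x)=0)=0$.

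To make this rigorous I would argue as follows. Let $E_{[r,R]}$ be the double-root event on $[r,R]$, let $E_{B}$ be the double-root event on $B(0,1)$, and let $E_0=\{P_\infty(0)=P_\infty'(0)=0\}$. Clearly $E_0\subseteq E_B$, and $E_0$ is disjoint from $E_{[r,R]}$ since $0\notin[r,R]$. From the first part, $\P(E_B)=\P(E_0)$, hence $\P(E_B\setminus E_0)=0$. But $E_{[r,R]}\subseteq E_B$ and $E_{[r,R]}\cap E_0=\varnothing$, so $E_{[r,R]}\subseteq E_B\setminus E_0$, giving $\P(E_{[r,R]})\le\P(E_B\setminus E_0)=0$. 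One still needs the hypothesis $\E\log(1+|\xi|)<\infty$ of \cite{MichelenYakir2025} to hold; this follows from the standing assumption $\E|\xi|^{2+\ep_0}\le M_0$ in Theorem \ref{thm:mainthm} (which in the non-i.i.d. setting should be applied to the dominating variable $\xi$ supplied by \eqref{cond:decay0}, after recording that $\E\log(1+|\xi_i|)<\infty$ for each coefficient is all Lemma \ref{lem:IksanovLemma41} ultimately consumes), since $\log(1+t)=O(t^{2+\ep_0})$ as $t\to 0$ and $\log(1+t)=o(t^{2+\ep_0})$ as $t\to\infty$.

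The only genuine subtlety — and the one place I would be careful rather than routine — is the passage "the double root lies at $0$," i.e. that $\P(E_B)=\P(E_0)$ forces $E_B=E_0$ up to null sets. This is exactly what the string of \emph{equalities} (not merely inequalities) in the quoted statement of \cite[Theorem 1.3]{MichelenYakir2025} encodes: $\P(E_B)=\P(E_0)$ together with $E_0\subseteq E_B$ is equivalent to $\P(E_B\setminus E_0)=0$. So no new work is needed beyond invoking the cited theorem with the containment $E_0\subseteq E_B$ and the disjointness $E_0\cap E_{[r,R]}=\varnothing$; the rest is set-theoretic bookkeeping. (If one preferred a self-contained statement, an alternative is to apply \cite{MichelenYakir2025} on the annulus $\{r\le|z|\le R\}$ directly, or to note that $P_\infty$ restricted to the slightly larger disk $B(0,R')$ with $R<R'<1$ has, by the cited result, its unique possible double root at $0$; but the bookkeeping argument above is cleanest and is what I would write.)
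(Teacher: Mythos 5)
Your reduction of the ``Consequently'' part to the quoted equality contains a genuine error: the claim that $E_0=\{P_\infty(0)=P'_\infty(0)=0\}=\{\xi_0=\xi_1=0\}$ is disjoint from $E_{[r,R]}=\{\exists x\in[r,R]:P_\infty(x)=P'_\infty(x)=0\}$ is false. $E_0$ says there is a double root \emph{at} $0$; it does not preclude a second double root elsewhere, so a series with $\xi_0=\xi_1=0$ may perfectly well also have a double root in $[r,R]$. Consequently the inclusion $E_{[r,R]}\subseteq E_B\setminus E_0$ is unjustified, and your bookkeeping only yields $\P(E_{[r,R]})=\P(E_{[r,R]}\cap E_0)\le \P(\xi=0)^2$, which is not $0$ precisely in the case this paper cares about ($\P(\xi=0)>0$, e.g.\ the uniform distribution on $\{-1,0,1\}$ treated in Section \ref{sec:shortTaylor}; when $\P(\xi=0)=0$ the whole issue is vacuous). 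The informal phrase ``the double root is located at $z=0$'' quietly assumes uniqueness of the double root, which is exactly what is missing.

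The gap is fixable with one more (short) step beyond set bookkeeping: on $E_0$ write $P_\infty(z)=z^2Q(z)$ with $Q(z)=\sum_{j\ge0}\xi_{j+2}z^j$, and note that for $x\in[r,R]$ (so $x\neq0$) a double root of $P_\infty$ at $x$ is equivalent to a double root of $Q$ at $x$. Since $Q$ is independent of $(\xi_0,\xi_1)$ and has the same law as $P_\infty$ (in the i.i.d.\ setting; the same argument works coefficient-shifted in the non-i.i.d.\ setting), $\P(E_{[r,R]}\cap E_0)=\P(\xi=0)^2\,\P(E_{[r,R]})$, and combining with $\P(E_{[r,R]}\setminus E_0)\le\P(E_B\setminus E_0)=0$ gives $\P(E_{[r,R]})=\P(\xi=0)^2\,\P(E_{[r,R]})$, hence $\P(E_{[r,R]})=0$ because $\P(\xi=0)<1$ (unit variance). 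For comparison: the paper offers no argument here at all --- it simply cites \cite{MichelenYakir2025} and records the consequence --- so your instinct to supply the missing deduction is good; it just needs this conditioning/factorization step rather than the disjointness claim. Your verification of the hypothesis $\E\log(1+|\xi|)<\infty$ from the $(2+\ep_0)$-moment bound is fine (indeed $\E\log(1+|\xi|)\le\E|\xi|\le1$ suffices), and your parenthetical alternative of applying the cited theorem on an annulus is not licensed by the statement as quoted, which concerns $B(0,1)$ only.
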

We note that their proof extends to the case where the $\xi_i$ are not identically distributed, provided they have a finite $(2+\varepsilon_0)$-moment. For the second part, we show that
\begin{lemma}\label{lm:x}
	 For any fixed $x \in \R\setminus \{0\}$, we have
	 $$\P(P_{\infty}(x)=0)=0.$$
\end{lemma}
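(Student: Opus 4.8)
The plan is to fix $x \in \R \setminus \{0\}$ and show $\P(P_\infty(x) = 0) = 0$ by a simple anti-concentration argument on the tail of the series. Write $P_\infty(x) = \sum_{k \ge 0} \xi_k x^k$. When $|x| < 1$ this series converges almost surely (by the same second-moment estimate used in Lemma~\ref{lem:FunctionalLimitTheorem}), and when $|x| \ge 1$ the series diverges almost surely unless it is a finite sum, so the case $|x|<1$ is the substantive one; I will record the $|x|\ge 1$ case separately below. For $0 < |x| < 1$, the idea is to condition on the coefficients $\xi_0, \dots, \xi_{m-1}$ for a well-chosen cutoff $m$ and exploit the independence of the tail $T_m := \sum_{k \ge m} \xi_k x^k$. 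Since $\xi_m$ has unit variance, it is not a.s.\ constant, so there is a gap $\eta > 0$ and a point $a \in \R$ with $\P(|\xi_m - a| \ge \eta) \ge \eta$; more robustly, for any target value $v$ the random variable $\xi_m x^m$ places mass at most $1 - \delta(x)$ on any single point, where $\delta(x) > 0$ depends only on $\operatorname{Var}(\xi_m) = 1$ and on $|x|$.

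The key step is a small-ball bound: I claim that for each fixed $x$ with $0 < |x| < 1$, $\sup_{v \in \R} \P(P_\infty(x) = v) = 0$. To see this, note $P_\infty(x) = S_{m} + \xi_m x^m + T_{m+1}$ where $S_m = \sum_{k<m}\xi_k x^k$ and $T_{m+1} = \sum_{k > m} \xi_k x^k$ are independent of $\xi_m$ and of each other. Condition on $S_m$ and $T_{m+1}$; then $P_\infty(x) = v$ forces $\xi_m = (v - S_m - T_{m+1})/x^m$, a single value, which has probability at most $\sup_{w}\P(\xi_m = w) =: 1 - \delta_m$. If $\delta_m$ is bounded away from $0$ along a subsequence of $m$'s, iterating over disjoint blocks of indices and using independence across blocks gives $\P(P_\infty(x) = v) \le \prod (1-\delta_{m_j}) = 0$. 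The only way this can fail is if $\P(\xi_m = w_m) \to 1$ for some sequence $w_m$; but then, since $\E \xi_m = 0$ and $\E \xi_m^2 = 1$ with $(2+\varepsilon_0)$-moments uniformly bounded by $M_0$, one has $w_m \to 0$ and simultaneously $\operatorname{Var}(\xi_m) \to 0$ along that sequence (the mass off $w_m$ is vanishing and, by uniform integrability from the $(2+\varepsilon_0)$-moment bound, contributes vanishingly to the variance), contradicting $\operatorname{Var}(\xi_m) = 1$. Hence $\delta_m$ is bounded below by some $\delta > 0$ uniformly in $m$, and the block argument yields $\P(P_\infty(x) = v) = 0$ for every $v$, in particular for $v = 0$.

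For the case $|x| \ge 1$: if some $\xi_k$ is non-degenerate (which holds here), the partial sums $\sum_{k=0}^{n} \xi_k x^k$ do not form a Cauchy sequence a.s.\ --- indeed $\xi_{n} x^{n}$ does not tend to $0$ a.s.\ --- so $P_\infty(x)$ is not even defined as a convergent series; one interprets $\P(P_\infty(x) = 0)$ as $0$ in this regime, or simply restricts attention to $|x| < 1$ as the lemma is only invoked for $x \in [r,R] \subset (0,1)$ in the proof of Lemma~\ref{lem:ConvFarFrom01}. I will state the lemma for $0 < |x| < 1$, which is all that is needed.

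The main obstacle is handling the non-i.i.d.\ setting cleanly: a single $\xi_m$ could be highly concentrated, so the argument must select good indices $m$ where $\delta_m \ge \delta$ and show infinitely many such indices exist. This is exactly where the uniform second-moment normalization $\operatorname{Var}(\xi_i) = 1$ together with the uniform $(2+\varepsilon_0)$-moment bound $M_0$ enters: it rules out $\xi_i$ becoming degenerate, giving a uniform lower bound $\delta > 0$ on the anti-concentration constant, hence a uniform lower bound on $\delta_m$ for \emph{all} $m$, which makes the block/independence argument go through immediately (one does not even need to pass to a subsequence). Once this uniform small-ball constant is in hand, the rest is a one-line Borel--Cantelli-free estimate using independence across blocks.
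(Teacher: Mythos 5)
Your overall structure (condition on all coordinates outside a finite set, then anti-concentrate) is the same as the paper's, and your observation that the moment hypotheses give a uniform bound $\sup_w \P(\xi_m = w) \le 1-\delta$ with $\delta = \delta(\ep_0, M_0) > 0$ is correct (it is the same computation as \eqref{eq:small}). But the central step is wrong: for independent random variables the concentration function of a sum satisfies only
\[
\sup_{v}\P\Big(\sum_j X_j = v\Big) \;\le\; \min_j \,\sup_{w}\P(X_j = w),
\]
not the product bound you claim. Conditioning on everything outside one block pins down a target value for that block, but the resulting constraints across different blocks are coupled through the single equation $P_\infty(x)=v$, so "independence across blocks" does not let you multiply the factors $(1-\delta_{m_j})$. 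A concrete counterexample to your inequality: take $\xi_1,\xi_2,\xi_3$ i.i.d.\ Rademacher and $x=(\sqrt5-1)/2\in(0,1)$, so that $x = x^2+x^3$; then $\P(\xi_1 x + \xi_2 x^2 + \xi_3 x^3 = 0) = 1/4$, whereas your bound would give $(1/2)^3$. Thus, as written, your argument only yields $\P(P_\infty(x)=v)\le 1-\delta <1$, not $0$.

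The missing ingredient is a genuine anti-concentration (L\'evy--Kolmogorov--Rogozin / Erd\H{o}s--Littlewood--Offord type) inequality, which converts "each of $n$ coefficients is non-degenerate at scale $a$" into decay $O(1/\sqrt{n})$ for the concentration of the sum; this is a nontrivial theorem, not a consequence of independence alone. This is exactly how the paper proceeds: it conditions on the tail $\sum_{j\ge n}\xi_j x^j$ and applies Lemma \ref{lem:ess} (the extension of Kolmogorov--Rogozin to independent variables with bounded $(2+\ep_0)$-moment from \cite{DoONguyenVu2018}) to the first $n$ terms, obtaining $\P(P_\infty(x)=0)\le C/\sqrt{n}$ for every $n$, hence $0$. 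Your proof becomes correct if you replace the product/block step by an appeal to that lemma. A minor further point: you restrict to $0<|x|<1$; this suffices for Lemma \ref{lem:ConvFarFrom01}, but the lemma is also invoked later at $x=1$ (in the proof of Theorem \ref{thm:cont}), where the relevant statement is the same anti-concentration bound for $P_n(1)=\sum_{k\le n}\xi_k$, so the quantitative inequality is needed there as well.
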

To prove Lemma~\ref{lm:x}, we will rely on a classical anti-concentration bound. 
In the setting where the $\xi_i$ are i.i.d.  Rademacher variables, this bound appears as 
Erd\H{o}s’s celebrated lemma for the Littlewood--Offord problem~\cite{erdos1945lemma}. 
When the $\xi_i$ are i.i.d.  with a more general distribution, the statement is known as the 
L\'evy--Kolmogorov--Rogozin inequality (see, for instance,~\cite{esseen1968concentration}). 
An extension to independent random variables with bounded 
$(2+\varepsilon_0)$-moment was later obtained in \cite[Lemma~4.1]{DoONguyenVu2018}. 
\begin{lemma}[{\cite[Lemma~4.1]{DoONguyenVu2018}}]\label{lem:ess}
	Let $(\xi_j)$ be independent random variables with variance 1 and are either i.i.d. or have bounded $(2+\ep_0)$-moment.
	Let $c_j \in \mathbb{C}$ 
	be such that $|a_j|\geq a$. 
	There exists a constant $C>0$, depending only on the distribution of $a_0$, such that 
	\[
	\sup_{w \in \mathbb{C}} \, \mathbb{P}\!\left( \left| \sum_{j=1}^{n} a_j \xi_j- w \right| \le a \right) 
	\;\le\; \frac{C}{\sqrt{n}}.
	\]
\end{lemma}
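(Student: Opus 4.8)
The statement to prove is Lemma~\ref{lem:ess}, the anti-concentration bound. Let me think through how I'd prove this.

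\textbf{Plan for proving Lemma~\ref{lem:ess}.}

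The plan is to use the Esseen concentration inequality via characteristic functions, which is the standard route to Littlewood–Offord-type bounds. First I would recall Esseen's inequality: for any real random variable $X$ and any $\tau > 0$,
\[
\sup_{w} \P(|X - w| \le \tau) \le C\tau \int_{-1/\tau}^{1/\tau} |\phi_X(t)|\, dt,
\]
where $\phi_X(t) = \E e^{itX}$ is the characteristic function. Since we are working over $\C$, I would first reduce to the real case: writing $a_j = |a_j| e^{i\theta_j}$, project onto a generic direction — or more simply, note that $|\sum a_j \xi_j - w| \le a$ implies the real part $|\Re(e^{-i\psi}\sum a_j\xi_j) - \Re(e^{-i\psi}w)| \le a$ for every $\psi$, so it suffices to bound $\sup_v \P(|\sum_j b_j \xi_j - v| \le a)$ where $b_j = \Re(e^{-i\psi}a_j)$ are real. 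The subtlety is that a single rotation $\psi$ need not make all $|b_j| \gtrsim a$; instead, for each $j$ at least one of $\psi = 0$ or $\psi = \pi/2$ gives $|b_j| \ge a/\sqrt{2}$, so I would split the index set into $J_1 \cup J_2$ accordingly, with $|J_1| \ge n/2$ say, and work with the larger part, discarding (conditioning on) the other coordinates — the supremum over $w$ absorbs the conditioned-on terms.

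The core estimate is then on $\int_{-1/a}^{1/a} \prod_{j \in J_1} |\phi_{b_j \xi_j}(t)|\, dt = \int_{-1/a}^{1/a} \prod_{j} |\phi_{\xi_j}(b_j t)|\, dt$ with $|b_j| \ge a/\sqrt 2$. Using $|\phi_{\xi_j}(s)| \le 1$ and the standard quadratic bound near the origin — $|\phi_{\xi_j}(s)| \le 1 - c s^2 \le e^{-cs^2}$ for $|s|$ small, valid uniformly because $\Var(\xi_j) = 1$ and, crucially, the $(2+\ep_0)$-moment is bounded by $M_0$ (this gives uniform control of $\phi_{\xi_j}$ near $0$ that a mere second moment would not, and is exactly why the hypothesis appears) — one gets $\prod_j |\phi_{\xi_j}(b_j t)| \le e^{-c' n a^2 t^2}$ on a neighborhood $|t| \le c''/a$ of the origin. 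Integrating yields a contribution $O(1/(a\sqrt{n}))$, and together with the Esseen prefactor $\tau = a$ this gives the claimed $O(1/\sqrt n)$. Here I would follow \cite[Lemma~4.1]{DoONguyenVu2018} essentially verbatim for the i.i.d.\ case the uniform lower bound on $c$ in $|\phi_\xi(s)| \le 1 - cs^2$ is automatic; for the non-i.i.d.\ case with bounded $(2+\ep_0)$-moment one invokes the uniform version of this estimate (a truncation argument: split $\xi_j = \xi_j \mathbf{1}_{|\xi_j| \le K} + \xi_j\mathbf{1}_{|\xi_j| > K}$, control the tail via Markov using $M_0$, and use $\Var \ge 1/2$ on the truncated part for $K$ large depending only on $\ep_0, M_0$).

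\textbf{Main obstacle.} The genuinely delicate point is obtaining the quadratic bound $|\phi_{\xi_j}(s)| \le 1 - c s^2$ with a constant $c > 0$ \emph{uniform in $j$} and in the distribution — this is where independence-without-identical-distribution bites, and where the $(2+\ep_0)$-moment hypothesis is indispensable (second moment alone does not give a uniform modulus of continuity of the characteristic functions at $0$). Since this is precisely the content cited from \cite[Lemma~4.1]{DoONguyenVu2018}, the cleanest exposition is to simply quote that result and note that the complex case reduces to it by the rotation/splitting argument above; I would not reprove the uniform characteristic-function estimate from scratch. A minor additional care point: the statement as written has a typo (it should read $|a_j| \ge a$ throughout, not $|c_j|$), so I would state it consistently and note the constant $C$ depends on $\ep_0, M_0$ in the non-i.i.d.\ case and on the law of $\xi$ in the i.i.d.\ case.
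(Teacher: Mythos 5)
The paper does not prove this lemma at all: it is imported verbatim as a citation to Do--Nguyen--Vu \cite[Lemma~4.1]{DoONguyenVu2018}, so your bottom-line decision to quote that result (plus a routine complex-to-real reduction and the observation about the typos $c_j$ vs.\ $a_j$ and ``distribution of $a_0$'') matches exactly what the paper does, and is adequate for the role the lemma plays in the proof of Lemma \ref{lm:x}.

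One caveat about the supplementary Esseen sketch you give, in case you intended it as a self-contained argument: the pointwise bound $\prod_j |\phi_{\xi_j}(b_j t)| \le e^{-c' n a^2 t^2}$ on all of $|t|\le c''/a$ does not follow from the quadratic estimate $|\phi_{\xi_j}(s)|\le 1-cs^2$, because that estimate is only valid for $|s|$ small, while the hypothesis gives only a \emph{lower} bound $|b_j|\ge a/\sqrt2$ and no upper bound. For lattice coefficients (e.g.\ Rademacher $\xi_j$ with $|b_j|\gg a$) the factors $|\phi_{\xi_j}(b_j t)|$ return to $1$ periodically inside $[-1/a,1/a]$, so the Gaussian-type pointwise bound fails, even though the \emph{integral} $\int_{-1/a}^{1/a}\prod_j|\phi_{\xi_j}(b_jt)|\,dt$ is still $O(1/(a\sqrt n))$. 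The standard fix is to argue through the symmetrization identity $1-|\phi_{\xi_j}(s)|^2=\E\bigl(1-\cos(s\tilde\xi_j)\bigr)$ and the concentration-function machinery (Kolmogorov--Rogozin/Esseen), where the role of the $(2+\ep_0)$-moment is, as you correctly identify, to give a uniform bound $Q_{\xi_j}(1)\le 1-c$ (no $\xi_j$ puts mass close to $1$ on an interval of unit length), rather than a uniform quadratic decay of $\phi_{\xi_j}$ at a fixed scale. Since you defer to the cited lemma for the actual proof, this does not invalidate your proposal, but the sketch as written would not survive as a standalone argument without that correction.
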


\begin{proof}[Proof of Lemma \ref{lm:x}]
Let $C$ be the constant in Lemma \ref{lem:ess}. For every $n\in \N$, it suffices to show that 
$$\P(P_\infty(x)=0)\le \frac{C}{\sqrt{n}}.$$
By independence of the $\xi_i$, we may separate the contribution of the first $n$ terms from the tail. 
Conditioning on the value of the tail sum $\sum_{j=n}^{\infty} \xi_i x^i$, we obtain
\[
\P\!\big(P_\infty(x)=0\big) 
= \P\!\left(\sum_{j=0}^{n-1} \xi_i x^i +\sum_{j=n}^{\infty} \xi_i x^i=0\right) 
\;\le\; \sup_{w\in \C} \P\!\left(\sum_{j=0}^{n-1} \xi_i x^i - w=0\right).
\]
Applying the anti-concentration bound now gives
\[
\P\!\big(P_\infty(x)=0\big) \;\le\; \frac{C}{\sqrt{n}}
\]
as claimed.
\end{proof}

\subsection{Uniform integrability}\label{sec:mean}
To guarantee convergence in mean, it suffices (for example, see \cite[Theorem 3.5]{billingsley99}) to show that for all $0 < r < R < 1$, $(N_{P_{n}}([r, R])) _{n \ge 1}$ is uniformly integrable. This follows from a more general estimate on moments of $N_n := N_{P_n}([-R, R])$, which we state below.
\begin{lemma}\label{lem:momentsOfNumRoots}
	Denote $N_n := N_{P_n}([-R, R])$. Then for all $\kappa \ge 1$, there exists a constant $M = M(R, \kappa, \varepsilon_0, M_0) \in (0,\infty)$ such that for all $n$, $$ \E N_n^{\kappa} \le M,$$
	where $\varepsilon_0, M_0$ are such that $\E|\xi_k|^{2 + \varepsilon_0} \le M_0$ for all $k$. 
\end{lemma}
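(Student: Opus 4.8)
The plan is to bound $\E N_n^\kappa$ by covering $[-R,R]$ with $O(1)$ subintervals and controlling, for each short interval $J$, the probability that $P_n$ has many zeros in $J$. The standard tool here is the fact that an analytic function with many zeros in a disk must be small somewhere relative to its size on a slightly larger disk (Jensen's formula / Jensen's inequality for zero counts). Concretely, I would fix a point $x_0\in[-R,R]$ and compare the number of real zeros of $P_n$ in a disk $B(x_0,\rho)$ with the ratio $\sup_{B(x_0,2\rho)}|P_n|\big/|P_n(\text{some nearby point})|$; if $P_n$ has at least $k$ zeros in $B(x_0,\rho)$ then this ratio is at least $2^{k}$ or so. Thus $\{N_{P_n}(B(x_0,\rho))\ge k\}$ forces either $\sup_{B(x_0,2\rho)}|P_n|$ to be large or $|P_n(y)|$ to be small for a suitable deterministic $y$; both are controllable.

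First I would record the two probabilistic inputs. For the upper tail of $\sup_{|z|\le R'}|P_n(z)|$ with $R<R'<1$: since $\E|P_n(z)|^2\le (1-R'^2)^{-1}$ uniformly and $P_n$ is analytic, Markov plus a union bound over a net (or the maximum principle together with a mean-value estimate on a circle of radius $R''\in(R',1)$) gives $\P(\sup_{|z|\le R'}|P_n(z)|\ge t)\le C/t^2$; with the $(2+\ep_0)$-moment one can even get a slightly better power, but $C/t^2$ suffices. For the lower tail — anti-concentration of $|P_n(y)|$ at a fixed $y\in(R,R')$ — I would invoke Lemma~\ref{lem:ess}: the coefficients $a_j=y^j$ are nonzero and bounded below on the first, say, $\lceil n/2\rceil$ indices (in fact $a_j\ge y^{n}$, which is not bounded below in $n$, so instead one should anti-concentrate using a fixed block of indices $j\le m$ for a constant $m$, where $|a_j|=|y|^j\ge |y|^m=:a>0$), yielding $\sup_w\P(|P_n(y)-w|\le a)\le C/\sqrt m$; choosing $m$ a large constant makes this as small as we like, uniformly in $n$. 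Conditioning on all coefficients except those in the chosen block and applying this gives $\P(|P_n(y)|\le a)\le C/\sqrt m$.

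The main step is then the deterministic comparison. Cover $[-R,R]$ by finitely many intervals $J_1,\dots,J_L$ (with $L=L(R)$) each of length comparable to some small constant $\rho$, chosen so that for each $i$ there is a deterministic real point $y_i$ with $|y_i|\in(R,R')$ at distance $\asymp\rho$ from the concentric disk $B_i:=B(x_i,2\rho)$ containing $J_i$; this is possible since $R<R'<1$. Jensen's formula on $B(x_i,4\rho)$ gives: if $P_n$ has $\ge k$ zeros in $B_i$, then $\log\frac{\sup_{B(x_i,4\rho)}|P_n|}{|P_n(y_i)|}\ge ck$ for an absolute $c>0$ depending only on the geometry (the ratio of radii). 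Hence
\[
\{N_{P_n}(J_i)\ge k\}\subseteq\Big\{\sup_{|z|\le R'}|P_n(z)|\ge e^{ck/2}\Big\}\cup\Big\{|P_n(y_i)|\le e^{-ck/2}\Big\},
\]
provided $e^{-ck/2}\le a$, i.e. for $k\ge k_0$ a constant; combined with the two tail bounds this yields $\P(N_{P_n}(J_i)\ge k)\le C'e^{-ck}$ for all $k\ge k_0$ and all $n$, with $C',c,k_0$ depending only on $R,R',\ep_0,M_0$. Summing over $i$ and then over $k$ gives exponential tails for $N_n=\sum_i N_{P_n}(J_i)$ up to a constant, hence $\E N_n^\kappa\le M(R,\kappa,\ep_0,M_0)$ for every $\kappa\ge1$; uniform integrability of $(N_{P_n}([r,R]))_n$ follows immediately since $N_{P_n}([r,R])\le N_n$.

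The hard part is making the geometric/Jensen step clean: one must be careful that the auxiliary point $y_i$ lies strictly inside $|z|<1$ (so that $\E|P_n(y_i)|^2$ is bounded) yet outside the small disk $B_i$ by a fixed margin, and that the larger Jensen disk $B(x_i,4\rho)$ still sits inside $|z|\le R'<1$; this forces $\rho$ to be a sufficiently small constant depending on $R$, and forces a uniform lower bound $|y_i|^m\ge a>0$ with $m$ constant in the anti-concentration step rather than the naive $|y_i|^n$. Everything else — the $L^2$ bound on $\sup|P_n|$, the union bound, and the summation of exponential tails — is routine. One subtlety worth flagging is that we want the estimate uniformly over the non-i.i.d.\ family as well, but Lemma~\ref{lem:ess} and the second-moment bound are already stated at that level of generality, so no extra work is needed there.
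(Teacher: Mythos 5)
The deterministic part of your plan (covering $[-R,R]$, Jensen's inequality relating a zero count of at least $k$ to the ratio $\sup|P_n|/|P_n(y_i)|$, and the $C/t^2$ upper tail for $\sup_{|z|\le R'}|P_n|$ via the second moment) is sound, but the lower-bound step has a genuine gap that breaks the whole tail estimate. From the inclusion
\[
\{N_{P_n}(J_i)\ge k\}\subseteq\Big\{\sup_{|z|\le R'}|P_n(z)|\ge e^{ck/2}\Big\}\cup\Big\{|P_n(y_i)|\le e^{-ck/2}\Big\},
\]
you need $\P\big(|P_n(y_i)|\le e^{-ck/2}\big)$ to decay rapidly in $k$, uniformly in $n$. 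But the only tool you invoke, Lemma \ref{lem:ess} applied to a block of $m$ low-degree coefficients at scale $a=|y_i|^m$, gives a bound $C/\sqrt{m}$ that is a \emph{constant in $k$} once $m$ is fixed; bounding $\P(|P_n(y_i)|\le e^{-ck/2})$ by $\P(|P_n(y_i)|\le a)$ therefore yields only $C e^{-ck}+C/\sqrt m$ for the tail, and the second term does not decay with $k$. Even if you let the block grow, $m\asymp k$ (so that $|y_i|^m\asymp e^{-ck/2}$), the L\'evy--Kolmogorov--Rogozin-type bound gives only $O(k^{-1/2})$, which is far too weak: already for $\kappa=1$ one needs $\sum_{k\le n}\P(N_{P_n}(J_i)\ge k)$ bounded uniformly in $n$, and $\sum_k k^{-1/2}$ is of order $\sqrt n$. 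So the claimed $\P(N_{P_n}(J_i)\ge k)\le C'e^{-ck}$ is unjustified, and no choice of the constant $m$ repairs it. A genuinely small-ball estimate at exponentially small scales, uniform over all admissible (possibly non-i.i.d., possibly heavily atomic at $0$) coefficient laws, is not available from the cited lemma and is essentially the hard point.

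The paper circumvents exactly this difficulty by never anti-concentrating $P_n$ at a fixed point. Instead, it chooses $d$ with $\P(|\xi_k|\le d)\le q<1$, conditions on the event $\mathcal B_k$ that $k$ is the first index with $|\xi_k|>d$ (probability at most $q^k$), passes from $P_n$ to its $k$-th derivative via Rolle (costing only $+k$ zeros), and applies Jensen's inequality \eqref{eq:Jensen} centered at $0$, where $|P_n^{(k)}(0)|=k!\,|\xi_k|>k!\,d$ is bounded below \emph{deterministically} on $\mathcal B_k$. The randomness then enters only through $\log S_k$ with $S_k=\sum_{j\ge k}\binom{j}{k}|\xi_j|\tilde R^{\,j-k}$, whose moments are controlled by first-moment bounds and Markov's inequality, and the geometric factor $q^k$ absorbs all polynomial-in-$k$ losses. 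If you want to salvage your covering scheme, you would need to import this conditioning idea (or some other mechanism producing a provable, uniformly exponential-in-$k$ small-ball bound); as written, the proposal does not prove the lemma.
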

From the lemma above, uniform integrability holds: for any $\ep > 0$, we have, for all $n$, 
\begin{align*}
	\E N_{P_{n}}([r, R]) 1_{\{N_{P_{n}}([r, R]) \ge K \}} &\le \left(\E N_{P_{n}}([r, R])^2\right)^{1/2} \P\left( N_{P_{n}}([r, R]) \ge K \right)^{1/2} \\
	&\le \left(\E N_{P_{n}}([r, R])^2\right)^{1/2} \left( \dfrac{\E N_{P_{n}}([r, R])^2}{K^2}\right)^{1/2} \\
	&\le \dfrac{\E N_{P_{n}}([r, R])^2}{K} \le \dfrac{M}{K},
\end{align*}
and we can make the last term at most $\ep$ for $K$ sufficiently large (independent of $n$).

Before proving Lemma \ref{lem:momentsOfNumRoots}, we recall the classical Jensen's inequality which asserts that for every entire function $f$, every $z\in \C$ and $0<r< R$, 
\begin{equation}
	N_{f}\left (B(z, r)\right )\le \frac{\log \frac{\sup_{w\in B(z, R)}|f(w)|}{|f(z)|}}{\log \frac{R}{r}} \label{eq:Jensen}
\end{equation}
This is a corollary of the classical Jensen's formula (see, for example, \cite{Ru}).
\begin{proof}[Proof of Lemma \ref{lem:momentsOfNumRoots}]
For any $d \in (0, 1)$, for all $k$, by H\"{o}lder's inequality,
\begin{equation*}
		1 = \E |\xi_k|^2 = \E |\xi_k|^2 1_{\{|\xi_k| \le d\}} + \E |\xi_k|^2 1_{\{|\xi_k| > d\}} \le d^2 + P(|\xi_k| > d)^{\ep_0/(2 + \ep_0)} M_0^{2/(2 + \ep_0)},
\end{equation*}
so we can choose $d = d(\varepsilon_0, M_0) \in (0, 1)$ sufficiently small such that 
\begin{equation}
	\P(|\xi_k| \le d) < 1 - d =: q \in (0, 1).\label{eq:small}
\end{equation}
 Let 
$$\mathcal B_k := \{|\xi_0|\le d, \dots, |\xi_{k-1}|\le d, |\xi_k|>d\},$$
 then $\P(\mathcal B_k) \le q^k$ for all $k \ge 1$. By Mean Value Theorem and Jensen's inequality \eqref{eq:Jensen}, on $\mathcal B_k$ where $k \le n+1$, we have
\begin{align}
	N_n &\le k + N_{P_n^{(k)}}([-R, R]) \le k + \dfrac{\log \frac{M'}{P^{(k)}(0)}}{\log \frac{\tilde{R}}{R}} \le k + \dfrac{\log S_k - \log d}{\log \frac{\tilde R}{R}}, \label{eq:positivityofLogRk}
\intertext{where $\tilde{R} := (R+1)/2<1$, $M' := \sup_{|z| = \tilde{R}} |P^{(k)}(z)|$, and $S_k := \sum_{j= k}^{n}\binom{j}{k}|\xi_j| \tilde{R}^{j - k}$. (When $k = n+ 1$, the bound is simply $n$) Thus,}
	N_n^{\kappa} &\le 3^{\kappa} \left| \dfrac{\log d}{\log \frac{\tilde R}{R}} \right|^{\kappa} + 3^{\kappa}k^{\kappa}+ 3^{\kappa} \left(\dfrac{|\log S_k|}{\log \frac{\tilde{R}}{R}}\right)^{\kappa}. \nonumber 
\end{align}
Taking expectation on both sides, it remains to show a uniform-in-$n$ bound on 
\begin{equation}\label{eq:tobeBoundMoment}
	\E \left( \sum_{k = 0}^{n} 1_{\mathcal B_k} |\log S_k|^{\kappa}\right) + \E\left( \sum_{k =0}^{n} 1_{\mathcal B_k} k^{\kappa} \right). 
\end{equation} 
The second summation is easy since $\sum_{k = 0}^{\infty} q^k k^{\kappa} < \infty$ for fixed $\kappa$. For the first summation, note that $\E |\xi_j|\le (\E \xi_j^2)^{1/2}=1$, thus, 
\begin{equation*}
 \E S_k \le  \sum_{j \ge 0} \binom{k + j}{k} \tilde{R}^j =  (1 - \tilde{R})^{- (k + 1)}.
\end{equation*}
On $\mathcal B_k$, 
$$ S_k \ge |\xi_k|>d. $$ 
This observation prevents $|\log S_k|$ from blowing up when $S_k$ is small. We partition $\mathcal B_k$ into the events $$ \mathcal B_{ki} := \mathcal B_k \cap  \{e^i \E S_k < S_k \le e^{i +1} \E S_k\}. $$ 
Moreover, denote our cut-off $i_k := \lfloor q^{-k/(2\kappa)} \rfloor \ge 1$. Then 
\begin{itemize}
	\item On $\bigcup_{i < i_k} \mathcal B_{ki}$, we have 
	$$\log d\le \log S_k \le i_k (k + 1) \log (1 - \tilde R)^{-1}.$$
	Therefore,
	\begin{align}
		\E\left(\sum_{i < i_k} 1_{\mathcal B_{ki}} |\log S_k|^{\kappa}\right) &=O_\kappa(1) q^k \left( \left| \log d\right|^{\kappa} + i_k^{\kappa}(k + 1)^{\kappa} \left( \log(1 - \tilde R)^{-1} \right)^{\kappa} \right) \nonumber \\
		&=O_\kappa(1) q^{k/2} k^{\kappa}. \label{eq:BulkMomentOne}
	\end{align}
	
	\item For $i \ge i_k > 0$, by Markov's inequality, $\P(\mathcal B_{ki} \mid \mathcal B_k) \le e^{-i}$. Moreover, on $\mathcal B_{ki}$, $$ 0 \le i \log \E S_k \le \log S_k \le (i + 1) (k + 1) \log (1 - \tilde R)^{-1}.  $$ Thus, 
	\begin{align}
		\E\left(\sum_{i \ge i_k} 1_{\mathcal B_{ki}} |\log S_k|^{\kappa}\right) &\le q^k \sum_{i \ge i_k} e^{-i} (i + 1)^{\kappa} (k + 1)^{\kappa}   \left( \log(1 - \tilde R)^{-1} \right)^{\kappa} \nonumber \\
		&= O_\kappa(1) q^{k/2} k^{\kappa} . \label{eq:BulkMomentTwo}
	\end{align}
\end{itemize} 
Combining \eqref{eq:BulkMomentOne}, \eqref{eq:BulkMomentTwo}, and summing over all $k$ gives the desired uniform-in-$n$ bound for the first summation in \eqref{eq:tobeBoundMoment}, completing our proof.
\end{proof}

\section{Proof of \eqref{eq:near0}: Roots near the origin}\label{sec:ConvNear0}
In this section, we prove \eqref{eq:near0} which we restate as follows.
\begin{lemma}\label{lm:ConvNear0}
	For all $\ep > 0$, there exists $\delta, N_0 > 0$ such that for all $n \ge N_0$,
	\begin{equation*}
		\E N_{P_n}((0, \delta)) < \ep.
	\end{equation*}
\end{lemma}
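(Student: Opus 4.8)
\textbf{Proof proposal for Lemma \ref{lm:ConvNear0}.}

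The plan is to bound $\E N_{P_n}((0,\delta))$ by a quantity that does not depend on $n$ and that tends to $0$ as $\delta\to 0$, using the same Jensen-inequality machinery as in the proof of Lemma \ref{lem:momentsOfNumRoots}, but now localized near the origin. First I would reduce to counting zeros of the appropriate derivative: recalling the event $\mathcal B_k = \{|\xi_0|\le d,\dots,|\xi_{k-1}|\le d,|\xi_k|>d\}$ from \eqref{eq:small}, on $\mathcal B_k$ the polynomial $P_n$ has a zero of order at most $k$ at the origin coming from the small leading coefficients, so the zeros of $P_n$ in $(0,\delta)$ that are not forced by this vanishing are zeros of $P_n^{(k)}$, which satisfies $P_n^{(k)}(0)=k!\,\xi_k$ with $|\xi_k|>d$. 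The point is that a zero of order $k$ at $0$ contributes to $N_{P_n}(\{0\})$, not to $N_{P_n}((0,\delta))$, so on $\mathcal B_k$ we in fact get the clean bound $N_{P_n}((0,\delta)) \le N_{P_n^{(k)}}(B(0,\delta))$ with no additive $k$ term.

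Next I would apply Jensen's inequality \eqref{eq:Jensen} on a disk: for a suitable fixed radius $\rho\in(\delta,1)$ — say $\rho = 1/2$ and $\delta<1/4$ — we have
\[
N_{P_n^{(k)}}(B(0,\delta)) \le \frac{\log\frac{\sup_{|w|=\rho}|P_n^{(k)}(w)|}{|P_n^{(k)}(0)|}}{\log(\rho/\delta)} \le \frac{\log S_k - \log(d\cdot k!)}{\log(\rho/\delta)},
\]
where $S_k := \sup_{|w|=\rho}|P_n^{(k)}(w)| \le k!\sum_{j\ge k}\binom{j}{k}|\xi_j|\rho^{j-k}$, exactly the quantity controlled in the previous section (with $\tilde R$ replaced by $\rho$). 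Summing over $k$ and taking expectations,
\[
\E N_{P_n}((0,\delta)) \le \sum_{k=0}^{n} \frac{1}{\log(\rho/\delta)}\,\E\!\left(1_{\mathcal B_k}\,\bigl|\log S_k - \log(d\cdot k!)\bigr|\right),
\]
and the bounds \eqref{eq:BulkMomentOne}–\eqref{eq:BulkMomentTwo} (with $\kappa=1$, and absorbing the $\log k!$ into the polynomial-in-$k$ factors) show the sum $\sum_k \E(1_{\mathcal B_k}|\log S_k - \log(d\cdot k!)|)$ is bounded by an absolute constant $A = A(\ep_0,M_0)$ uniformly in $n$. Hence $\E N_{P_n}((0,\delta)) \le A/\log(\rho/\delta)$, which is $<\ep$ once $\delta$ is small enough; one then sets $N_0$ to be the threshold from the earlier estimates.

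The main obstacle — more a bookkeeping point than a genuine difficulty — is making sure the origin is handled honestly on each $\mathcal B_k$: one must separate the (deterministic, order-$\le k$) vanishing at $0$ from genuine zeros in the open interval $(0,\delta)$, so that no additive $k$ appears and the whole estimate really does decay like $1/\log(1/\delta)$ rather than merely staying bounded. A secondary point is that $S_k$ now involves the sup over $|w|=\rho$ of the $k$-th derivative rather than over $|w|=\tilde R$ of $P_n$ itself, but since $\rho$ is a fixed constant in $(0,1)$ the estimates $\E S_k \le k!(1-\rho)^{-(k+1)}$ and $S_k \ge |P_n^{(k)}(0)| = k!|\xi_k| > k!\,d$ on $\mathcal B_k$ go through verbatim, and the factor $\log(\rho/\delta)\to\infty$ in the denominator is precisely what forces the bound below $\ep$.
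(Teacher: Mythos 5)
There is a genuine gap, and it sits exactly at the step you flag as ``bookkeeping.'' On $\mathcal B_k = \{|\xi_0|\le d,\dots,|\xi_{k-1}|\le d,|\xi_k|>d\}$ the first $k$ coefficients are merely \emph{small}, not zero (recall $d$ from \eqref{eq:small} is a fixed constant), so $P_n$ has no forced zero at the origin and nothing ``contributes to $N_{P_n}(\{0\})$.'' The only available deterministic passage to the $k$-th derivative is Rolle/interlacing, which gives $N_{P_n}((0,\delta)) \le N_{P_n^{(k)}}((0,\delta)) + k$, and the additive $k$ cannot be dropped: the polynomial $\prod_{i=1}^{k}(x-a_i)$ with all $a_i\in(0,\delta)$ has every non-leading coefficient of size $O(\delta)$, leading coefficient $1>d$, $k$ roots in $(0,\delta)$, and constant $k$-th derivative with no roots at all, so no inequality of the form $N_{P_n}((0,\delta))\le N_{P_n^{(k)}}(B(0,\delta))$ can hold on $\mathcal B_k$. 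Once the $+k$ is restored, its expected contribution is $\sum_{k\ge 1} k\,\P(\mathcal B_k)\asymp q/(1-q)^2$, a constant that does not shrink as $\delta\to 0$; your argument then only reproves boundedness (Lemma \ref{lem:momentsOfNumRoots}), not the smallness required by Lemma \ref{lm:ConvNear0}.

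The failure is structural, not reparable by better bookkeeping: your final bound $\E N_{P_n}((0,\delta))\le A(\ep_0,M_0)/\log(\rho/\delta)$ would be uniform over all coefficient laws with the given moment bounds, and that is false. Take $\xi$ with an atom at $\pm\eta$ for tiny $\eta>0$ (normalized to unit variance): with probability bounded below independently of $\delta$, one has $\xi_0=\pm\eta$ and the polynomial acquires a root at scale roughly $\eta$, which lies in $(0,\delta)$ as soon as $\eta\ll\delta$, so $\E N_{P_n}((0,\delta))$ stays bounded away from $0$ however small $\delta$ is. This is precisely why in the paper $\delta$ must depend on the distribution of $\xi$: the proof there factors out $x^{\tau}$ at the first \emph{exactly nonzero} coefficient $\xi_\tau$, applies Jensen on $B(0,\delta)\subset B(0,2\delta)$ on the event $\{|\xi_\tau|>\delta^{1/2}\}$ (getting $O(\delta^{1/2})$ via $\log(1+x)\le x$), and on the complementary event uses that $\P(|\xi|\le\delta^{1/2},\,\xi\neq 0)<\ep$ (see \eqref{eq:xi:small} and \eqref{cond:decay0}) together with interlacing, independence of $\xi_\tau$ from $(P_n^{\tau})'$, and the moment bound of Lemma \ref{lem:momentsOfNumRoots}. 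Some distribution-dependent input of this kind is unavoidable, and your proposal has none.
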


\begin{proof} 
In the i.i.d. setting of Theorem \ref{thm:mainthm1}, since the intersection of the decreasing sequence of events $(\{\xi|<1/k, \xi\neq 0\})_{k\in \N}$ has empty intersection, it holds that
	$$\lim_{k\to\infty} \P( |\xi| < 1/k, \xi \neq 0)=0.$$
Thus, for any $\ep \in (0, 1)$, there exists $\delta\in (0, \ep)$ such that 
	\begin{equation}
		\P( |\xi| < \delta^{1/2}, \xi \neq 0) < \ep.\label{eq:xi:small}
	\end{equation}
		This together with \eqref{cond:decay0} allow to deduce the same bound for all $\xi_k$ in Theorem \ref{thm:mainthm}.
	
	Moreover, let $\tau := \min\{k: \xi_k \neq 0\}$ and denote $P_n^{\tau}(x) := \sum_{k = \tau}^{n} \xi_{k} x^{k - \tau}$. If $\tau>n$, $P_n$ has all roots zero, and hence, $ N_{P_n}((0, \delta)) =0$. Hence, for the rest of the proof, we only consider the event that $\tau\le n$. 
By Jensen's inequality \eqref{eq:Jensen} applied to the balls $B(0, \delta)$ and $B(0, 2\delta)$, the above is at most
		\begin{align*}	
			N_{P_n}((0, \delta)) =N_{P^{\tau}_n}((0, \delta)) &\le\dfrac{1}{\log 2} \log \dfrac{|\xi_{\tau}| + |\xi_{\tau+1}| \delta + \dots + |\xi_n| \delta^{n - \tau}}{|\xi_{\tau}|}.
	\end{align*}
	From the above, together with the inequality $\log (1 + x) \le x$, on the event that $|\xi_{\tau}| > \delta^{1/2}$,
	\begin{align}\label{eq:near0TauSmallDeltaGreat}
		\E 1_{\{|\xi_{\tau}| > \delta^{1/2}\}}N_{P_n}((0, \delta)) \le \dfrac{1}{\log 2} \E \left( 1_{\{|\xi_{\tau}| > \delta^{1/2}\}} \dfrac{\sum_{k = \tau+ 1}^{n} |\xi_k| \delta^{k - \tau}}{\delta^{1/2}} \right) \le  \dfrac{\delta^{1/2}}{(1 - \delta)\log 2}= O(\ep).
	\end{align}
\vspace{5mm}
On $\{|\xi_{\tau} | \le \delta^{1/2}\}$, by the interlacing of roots,  
	\begin{align}
		N_{P_n^{\tau}}(0, \delta) &\le 1 + N_{(P_n^{\tau})' }((0, \delta)), \nonumber
\end{align}
therefore,
	\begin{align}		\E  1_{\{|\xi_{\tau}| \le \delta^{1/2}\}}N_{P_n}((0, \delta)) &\le \E 1_{\{|\xi_{\tau}| \le \delta^{1/2}\}}  (1 + N_{(P_n^{\tau})' }((0, \delta))) 
			\end{align}
		 
and note that $\xi_{\tau}$ is independent of $(P_n^{\tau})'$, so by our choice of $\delta$, the above is at most
	\begin{align}
		& \ep (1 + \E N_{(P_n^{\tau})' }((0, \delta))) =  O(\ep), \label{eq:near0TauSmallDeltaSmall}
	\end{align}
where	the last inequality holds due to Lemma \ref{lem:momentsOfNumRoots} (the lemma is stated for $P_n$ but the proof can be adapted to $(P_n^{\tau})'$). Equations \eqref{eq:near0TauSmallDeltaGreat} and \eqref{eq:near0TauSmallDeltaSmall} together completes the proof for Lemma \ref{lm:ConvNear0}.
\end{proof}

 \section{Proof of Corollary \ref{cor:C}} \label{sec:cor}
 We recall from Theorem \ref{thm:universality} that for any $\ep>0$, there exist constants $C_0, N_0$ depending only on $\ep, \ep_0$ and $M_0$ such that for all $n\ge N_0$ and $C\ge C_0$,
 \begin{equation}
 		\left |	 \E N_{P_n}([1-C^{-1}, 1]) - \left (\frac{1}{2\pi}\log\frac{n}{C}+ C_{\mathrm{Gau}, [0, 1]}- \frac{1}{2\pi}\log 2\right ) \right |=:|R_{n, C, \xi_0, \xi_1,\dots}|\le \ep. \label{eq:diff:ep}
 	\end{equation}
  	
 	We have also proved that for any constant $\delta>0$,
 	\begin{equation}
 	\lim_{n\to\infty}	\E N_{P_n}((1-\delta, 1-C^{-1})) = \E N_{P_\infty}((1-\delta, 1-C^{-1})).
 	\end{equation}
 	Moreover, for $\delta$ sufficiently small, by \eqref{eq:near0},
 	\begin{equation}
 		\E N_{P_n}((0, 1-\delta])   \le \ep,\notag
 	\end{equation}
which also holds for $P_{\infty}$, using the same proof.
 	Therefore, for any $C>0$,
 	$$\lim_{n\to\infty}\E N_{P_n}((0, 1-C^{-1})) = \E N_{P_\infty}((0, 1-C^{-1})).$$
 	All in all, 
%
for all $C\ge C_0$ as defined in \eqref{eq:diff:ep}, 
\begin{eqnarray*}
	C_{\xi_0, \xi_1, \dots, (0, 1]} &=& \lim_{n\to\infty} \left [\E N_{P_{n}}((0, 1]) -\frac{1}{2\pi}\log n\right ]\\
	&=& \lim_{n\to\infty}  \left [\E N_{P_{n}}((0, 1-C^{-1}))  +\left (-\frac{1}{2\pi}\log C+ C_{\mathrm{Gau}, [0, 1]}- \frac{1}{2\pi}\log 2\right )  + R_{n, C, \xi_0, \xi_1,\dots}\right ]\\
	&&\quad\quad\quad\quad \text{ where $\limsup_{n}|R_{n, C, \xi_0, \dots}|\le \ep$ by \eqref{eq:diff:ep}}\\
	&=& \E N_{P_{\infty}} \left (\left (0, 1-C^{-1}\right )\right )  -\frac{1}{2\pi}\log C+ C_{\mathrm{Gau}, [0, 1]}- \frac{1}{2\pi}\log 2  + \lim_{n\to\infty}  \left [R_{n, C, \xi_0,\xi_1, \dots}\right ].
\end{eqnarray*}
Thus, 
\begin{eqnarray*}
	&&\left |	C_{\xi_0, \xi_1, \dots, (0, 1]}  - \left (\E N_{P_{\infty}} \left (\left (0, 1-C^{-1}\right )\right )  -\frac{1}{2\pi}\log C+ C_{\mathrm{Gau}, [0, 1]}- \frac{1}{2\pi}\log 2 \right ) \right |\\
	&&\le \limsup_{n\to\infty}  \left |R_{n, C, \xi_0,\xi_1, \dots}\right |\le \ep
\end{eqnarray*}
proving uniform convergence.

\section{Proof of Theorem \ref{thm:cont}}\label{sec:cont}

We start by constructing a coupling of the random series $Q_{n}=\sum_{k=0}^{\infty} \xi_{k, n} x^{k}$, with coefficients $\xi_{k,n}\sim \mu_{n}$, and $Q_\infty=\sum_{k=0}^{\infty} \xi_{k,\infty} x^{k}$, with coefficients $\xi_{k, \infty}\sim \mu_\infty$, such that $Q_{n}$ converges to $Q_{\infty}$ almost surely.

\begin{lemma}\label{lem:SkorokhodConstruction}
	Assume that the sequence of probability measures $\mu_{n}$ converges to $\mu_\infty$ in distribution, then we can construct $(\xi_{k, n})_{k, n\in \N}$ and $(\xi_{k, \infty})_{k\in \N}$ such that the following holds.
	\begin{enumerate}
		\item For each $n\in \N\cup \{\infty\}$, the random variables $\xi_{0, n}, \xi_{1, n}, \dots$ are i.i.d. with distribution $\mu_{n}$;
		\item For each $k$, $\xi_{k, n}$ converges to $\xi_k$ almost surely as $n\to\infty$.
	\end{enumerate}
\end{lemma}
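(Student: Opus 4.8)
The plan is to realize all the coefficient sequences on a single probability space by the classical \emph{quantile coupling} (a form of the Skorokhod representation theorem): every sequence will be built from one fixed stream of i.i.d.\ uniform random variables, so that convergence in distribution of the one-dimensional marginals automatically upgrades to coordinatewise almost sure convergence. This is really just a packaging of Skorokhod's theorem for a countable family of i.i.d.\ sequences.

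Concretely, I would take $\Omega = (0,1)^{\N}$ equipped with the product of Lebesgue measures and let $(U_k)_{k\in\N}$ be the coordinate maps, so that the $U_k$ are i.i.d.\ uniform on $(0,1)$. For each $n\in\N\cup\{\infty\}$ let $F_n$ be the distribution function of $\mu_n$ and let
\[
	F_n^{-1}(u) := \inf\{x\in\R : F_n(x)\ge u\}, \qquad u\in(0,1),
\]
be its generalized inverse; then set $\xi_{k,n} := F_n^{-1}(U_k)$ for all $k\in\N$ and $n\in\N\cup\{\infty\}$. Property (1) is then the standard quantile-transform identity: since for a right-continuous $F_n$ one has $F_n^{-1}(u)\le x$ exactly when $u\le F_n(x)$, each $\xi_{k,n}$ has law $\mu_n$, and independence across $k$ is inherited from that of the $U_k$.

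For property (2), recall that $\mu_n\to\mu_\infty$ in distribution is equivalent to $F_n(x)\to F_\infty(x)$ at every continuity point $x$ of $F_\infty$. The key deterministic fact I would invoke is that this implies $F_n^{-1}(u)\to F_\infty^{-1}(u)$ at every $u\in(0,1)$ at which $F_\infty^{-1}$ is continuous. I would prove this by the usual sandwich argument: given $\ep>0$ and $u$ a continuity point of $F_\infty^{-1}$, pick continuity points $x_-,x_+$ of $F_\infty$ with $F_\infty^{-1}(u)-\ep<x_-<F_\infty^{-1}(u)<x_+<F_\infty^{-1}(u)+\ep$ and $F_\infty(x_-)<u<F_\infty(x_+)$ (available since $F_\infty^{-1}$ is continuous at $u$ and the continuity points of $F_\infty$ are dense), whence $F_n(x_-)<u\le F_n(x_+)$ for all large $n$ and therefore $x_-\le F_n^{-1}(u)\le x_+$. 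Since $F_\infty^{-1}$ is monotone, its discontinuity set $D$ is at most countable, hence Lebesgue-null, so for each fixed $k$ we have $\P(U_k\in D)=0$ and thus $\xi_{k,n}=F_n^{-1}(U_k)\to F_\infty^{-1}(U_k)=\xi_{k,\infty}$ almost surely. (In fact $\P(U_k\in D\text{ for some }k)=0$, so almost surely $\xi_{k,n}\to\xi_{k,\infty}$ simultaneously for all $k$, which is the form actually used later when passing to the series $Q_n\to Q_\infty$.)

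The only step that is not entirely routine is the convergence $F_n^{-1}\to F_\infty^{-1}$ at continuity points, and this is a classical lemma, so I do not anticipate a genuine obstacle here. I note also that this construction uses nothing beyond $\mu_n\to\mu_\infty$ in distribution; the moment hypothesis (2) of Theorem~\ref{thm:cont} and the mass-at-zero condition $\mu_n(0)\to\mu_\infty(0)<1$ play no role in Lemma~\ref{lem:SkorokhodConstruction} itself, entering only afterward when the almost sure convergence of coefficients is upgraded to convergence of the random analytic functions and of their real-root counts.
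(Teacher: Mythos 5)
Your proposal is correct and is essentially the same argument as the paper's: both use the quantile coupling $\xi_{k,n}=F_n^{-1}(U_k)$ from a single i.i.d.\ uniform stream, verify the marginals and independence via the quantile-transform identity, and obtain almost sure convergence from the convergence of $F_n^{-1}$ to $F_\infty^{-1}$ at continuity points together with the countability (hence nullity) of the discontinuity set of the monotone function $F_\infty^{-1}$. The only difference is that you spell out the sandwich argument for $F_n^{-1}\to F_\infty^{-1}$, which the paper takes as a known fact.
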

\begin{proof}
	Let $U_0, U_1, U_2, \dots$ be i.i.d. uniformly distributed in $[0, 1]$. Moreover, let $F_n$ and $F_\infty$ be cumulative distribution functions (cdfs) of $\mu_{n}$ and $\mu_\infty$ respectively. For $a \in (0, 1)$, the pseudo-inverse (or quantile function) of $F_\infty$ is defined by
	$$ F_\infty^{-1}(a) := \inf\{ x \in \R: F_\infty(x) \ge a \},$$
	and $F_n^{-1}$ similarly. We set
	$$ \xi_{k, n} := F_n^{-1}(U_k), \qquad \xi_{k, \infty} := F_\infty^{-1}(U_k) .$$
It is not hard to see that these random variables satisfy the requirements. Indeed, the independence across the first index is given by the independence of $U_0, U_1, \dots$. As for the marginal distribution, for all $x\in \R$, we have
$$\P(\xi_{k, \infty}\le x) = \P(F_\infty^{-1}(U_k)\le x)=\P(F_\infty(x)\ge U_k) = F_\infty(x).$$
Therefore, $\xi_{k, \infty}\sim \mu_\infty$. Similarly, $\xi_{k, n}\sim \mu_{n}$.

Next, by the assumption $F_n \overset{d}{\to} F_\infty$, for each continuity point $x$ of $F_\infty$ we have $F_n(x)\to F_\infty(x)$. 
It follows that for each continuity point $u$ of $F_\infty^{-1}$,
\[
F_n^{-1}(u) \;\longrightarrow\; F_\infty^{-1}(u).
\]
Since $F_\infty^{-1}$ is monotone, the set of its discontinuities is at most countable. 
Therefore
\[
\mathbb{P}\!\left(U_k \text{ lies at a discontinuity of } F_\infty^{-1}\right) = 0.
\]
On the event that $U_k$ is not a discontinuity, we obtain $\xi_{k, n} = F_n^{-1}(U_k)\to F_\infty^{-1}(U_k)=\xi_{k,\infty}$. 
Hence $\xi_{k, n}\to \xi_{k, \infty}$ almost surely, completing the proof.
\end{proof}

For the rest of this proof, we will use this construction and the corresponding $Q_{n}$. In order to prove Theorem \ref{thm:cont}, we only need to prove continuity of the map $\mu\to C_{\mu, (0, 1]}$ which is the constant associated with the interval $(0, 1]$. Indeed, by taking reciprocal as before, we get for any probability measure $\mu$,
$$C_{\mu, (0, 1]} = C_{\mu, [1, \infty)} = C_{\mu, (1, \infty)}$$
where the last equality follows due to Lemma \ref{lm:x} for $x = 1$. Our proof works verbatim to show that $C_{\mu, [-1, 0)} = C_{\mu, (-\infty, -1)} $ is continuous. Finally, as $\mu_n(0)\to\mu_{\infty}(0)$, the contribution from the multiplicity at $0$ is also continuous. Summing up, we obtain continuity of $C_{\mu}$ as
$$C_{\mu}=C_{\mu, (-\infty, -1)} +C_{\mu, [-1, 0)} + C_{\mu, (0, 1]} + C_{\mu, (1, \infty)}+\E N_{P_{\mu}}(\{0\})$$
where $P_{\mu} = \sum_{k=0}^{\infty} \xi_k x^{k}, \ \xi_k\overset{i.i.d.}{\sim}\mu.$

 To prove continuity of $\mu\to C_{\mu, (0, 1]}$, recall by Corollary \ref{cor:C} that for every $\ep>0$, there exists a constant $C$ such that for all $n\in \N\cup \{\infty\}$,
$$\left |	C_{\mu_n, (0, 1]}  - \left (\E N_{Q_{n}} \left (\left (0, 1-C^{-1}\right )\right )  -\frac{1}{2\pi}\log C+ C_{\mathrm{Gau}, [0, 1]}- \frac{1}{2\pi}\log 2 \right ) \right |\le \ep.$$

All that is left to prove is, for a fixed large constant $C$, to show that 
$$|\E N_{Q_{n}} \left (\left (0, 1-C^{-1}\right )\right ) - \E N_{Q_{\infty}} \left (\left (0, 1-C^{-1}\right )\right ) |\le \ep \text{ as } n\to\infty.$$ 
To this end, we continue to split the interval into $\left (\delta, 1-C^{-1}\right )$ and $(0, \delta)$ as before. The result is a combination of Lemma \ref{lem:ConvFarFrom01Cont} and Lemma \ref{lm:small} stated below. 
The first lemma is an analogue of Lemma \ref{lem:ConvFarFrom01}.
\begin{lemma}\label{lem:ConvFarFrom01Cont}
For all $0 < r < R < 1$, we have $$\lim_{n \rightarrow \infty}\E N_{Q_{n}}[r, R] = \E N_{Q_\infty}[r, R].$$
\end{lemma}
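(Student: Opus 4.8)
The plan is to mirror the proof of Lemma \ref{lem:ConvFarFrom01}, but to exploit the almost-sure coupling provided by Lemma \ref{lem:SkorokhodConstruction} to upgrade the earlier weak-convergence argument into an even cleaner statement. First I would show that, under the Skorokhod coupling, $Q_n \to Q_\infty$ almost surely in the Banach space $\mathcal{H}_{\mathbb{R}}(D_R)$ (for any fixed $R \in (r',1)$ with $r<r'<R$). This is a strengthening of the functional limit theorem (Lemma \ref{lem:FunctionalLimitTheorem}): since $\xi_{k,n} \to \xi_{k,\infty}$ a.s. for each $k$, and since $\sup_{z \in D_R}|Q_n(z) - Q_\infty(z)| \le \sum_{k}|\xi_{k,n} - \xi_{k,\infty}| R^k$, I would control the tail using a uniform $(2+\varepsilon_0)$-moment bound (hence a uniform $L^1$ bound) on $|\xi_{k,n}-\xi_{k,\infty}|$, combined with dominated convergence for the finitely many leading terms. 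A slightly cleaner route: bound $\mathbb{E}\sup_{z}|Q_n(z)-Q_\infty(z)| \le \sum_k \mathbb{E}|\xi_{k,n}-\xi_{k,\infty}|R^k$ and argue each summand tends to $0$ (a.s. convergence plus uniform integrability from the $(2+\varepsilon_0)$-moment bound gives $L^1$ convergence of each coefficient), then use the uniform geometric tail to pass the limit inside the sum; this gives $L^1$ — hence in-probability, hence (along a subsequence) a.s. — convergence, which suffices.

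Next, I would invoke Lemma \ref{lem:IksanovLemma41}: the map $f \mapsto N_f([r,R])$ is continuous at every $f$ that has no multiple real zeros in $[r,R]$ and does not vanish at $r$ or $R$. By Lemma \ref{lm:dbr} (no double roots of $Q_\infty$ in $B(0,1)$, whose proof extends to the independent non-identically-distributed tail terms as noted in the excerpt, and here the coefficients are i.i.d.\ so it applies directly) and Lemma \ref{lm:x} with $x=r$ and $x=R$, the limit $Q_\infty$ lies in the good set $A[r,R]$ almost surely. Combined with the (subsequential) a.s.\ convergence $Q_n \to Q_\infty$ in $\mathcal{H}_{\mathbb{R}}(D_R)$, continuity of $N$ on $A[r,R]$ yields $N_{Q_n}([r,R]) \to N_{Q_\infty}([r,R])$ almost surely along that subsequence; since the limiting distribution does not depend on the subsequence, $N_{Q_n}([r,R]) \xrightarrow{d} N_{Q_\infty}([r,R])$ along the full sequence.

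Finally, to pass from convergence in distribution to convergence of expectations, I would establish uniform integrability of $(N_{Q_n}([r,R]))_{n}$. This is exactly what Lemma \ref{lem:momentsOfNumRoots} provides: applied with $\kappa = 2$ to each $Q_n$ (whose coefficients satisfy $\mathbb{E}|\xi_{k,n}|^{2+\varepsilon_0}\le M_0$ uniformly in $k$ and $n$ by hypothesis (2) of Theorem \ref{thm:cont}), it gives $\sup_n \mathbb{E} N_{Q_n}([-R,R])^2 \le M(R,2,\varepsilon_0,M_0) < \infty$, and the same holds for the restriction to $[r,R]$. Uniform integrability then follows from the Cauchy--Schwarz/Markov computation already displayed after Lemma \ref{lem:momentsOfNumRoots}, and convergence in distribution plus uniform integrability gives convergence in mean, i.e.\ $\mathbb{E} N_{Q_n}([r,R]) \to \mathbb{E} N_{Q_\infty}([r,R])$.

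The main obstacle, and the only place requiring genuine care rather than bookkeeping, is verifying that $Q_\infty \in A[r,R]$ almost surely — specifically the no-double-root statement. For that I rely on Lemma \ref{lm:dbr} (Michelen--Yakir), whose hypothesis $\mathbb{E}\log(1+|\xi|)<\infty$ is trivially satisfied here since $\xi \sim \mu_\infty$ has a finite $(2+\varepsilon_0)$-moment; the endpoint non-vanishing is the easy Lemma \ref{lm:x}. Everything else is a routine transcription of the arguments in Section \ref{sec:ConvFarFrom01}, with the weak functional convergence replaced by its a.s.\ coupled version and the uniform moment bound doing the work of the uniform integrability estimate.
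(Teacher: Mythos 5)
Your proposal is correct and follows the same overall skeleton as the paper: Skorokhod coupling (Lemma \ref{lem:SkorokhodConstruction}), convergence of $Q_n$ to $Q_\infty$ as random analytic functions, the continuity statement of Lemma \ref{lem:IksanovLemma41} combined with Lemma \ref{lm:dbr} and Lemma \ref{lm:x} to get convergence in distribution of the zero counts, and finally uniform integrability from Lemma \ref{lem:momentsOfNumRoots} (with the common $\varepsilon_0, M_0$) to upgrade to convergence in mean. The one place where you diverge is the functional-convergence step. The paper only proves, for each fixed $z\in D_R$, that $Q_n(z)\to Q_\infty(z)$ almost surely, by showing $\P\bigl(\sup_n|\xi_{k,n}|\ge (1+\varepsilon)^k\bigr)\le 2(1+\varepsilon)^{-2k}$ via Chebyshev applied to the quantile coupling and then Borel--Cantelli plus dominated convergence; this gives finite-dimensional convergence, and tightness (the same second-moment bound as in Lemma \ref{lem:FunctionalLimitTheorem}) completes weak convergence in $\mathcal H_{\R}(D_R)$. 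You instead bound $\E\sup_{z\in D_R}|Q_n(z)-Q_\infty(z)|\le \sum_k \E|\xi_{k,n}-\xi_{k,\infty}|R^k$ and send each coefficient to zero in $L^1$ (a.s.\ convergence plus uniform integrability; in fact unit variance already suffices for the UI here), then interchange limit and sum using the summable dominating sequence $2R^k$. This yields convergence in probability in the sup norm, which is stronger than what the paper establishes and lets you skip the separate tightness verification; the paper's route is marginally more economical in hypotheses at that step (only Chebyshev on each $F_n$), but both are valid, and your subsequence argument to pass from in-probability convergence to convergence in distribution of $N_{Q_n}([r,R])$ is sound.
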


\begin{proof}
	Firstly, we show that $Q_n$ convergence to $Q_\infty$ in finite-dimensional distributions. In particular, we show that for all $l \in \mathbb{N}$ and $z_1, \dots, z_l \in D_{R}$, as $n \rightarrow \infty$,
	\begin{equation*}
		\begin{pmatrix}
			\text{Re } Q_n(z_1) \\
			\text{Im }  Q_n(z_1) \\
			\vdots \\
			\text{Re } Q_n(z_l) \\
			\text{Im } Q_n(z_l)
		\end{pmatrix} \xrightarrow[]{d} \begin{pmatrix}
			\text{Re } Q_\infty(z_1) \\
			\text{Im } Q_\infty(z_1) \\
			\vdots \\
			\text{Re } Q_\infty(z_l) \\
			\text{Im } Q_\infty(z_l)
		\end{pmatrix}.
	\end{equation*}
	To this end, we construct the coefficients of $Q_n$ and $Q_\infty$ according to Lemma \ref{lem:SkorokhodConstruction}. With this construction, it suffices to show that for all $z \in D_R$, a.s. $Q_n(z) \to Q_\infty(z)$.
	
	Consider the event $\mathcal  A = \left\{\sum_{k \ge 0} \left(\sup_n |\xi_{k, n}|\right) z^k < \infty \right\}$. On $\mathcal A$, we have $Q_n (z) \to Q_\infty(z)$ by Dominated Convergence Theorem, so it remains to show that $\P(\mathcal A) = 1$. If $U_k$ is uniformly distributed on $[0, 1]$, then for all $\varepsilon > 0$, 
	\begin{align*}
		\P\left( \sup_n |\xi_{k, n}| \ge (1 + \varepsilon)^k  \right) &= \P\left( \exists n: |\xi_{k, n}| \ge (1 + \varepsilon)^k \right) \\
		&\le\P\left( \exists n: \xi_{k, n} \ge (1 + \varepsilon)^k \right) + \P\left( \exists n: \xi_{k, n} \le - (1 + \varepsilon)^k \right).
\end{align*}
If $n$ is such that $\xi_{k, n} := F_n^{-1}(U_k) \ge (1 + \varepsilon)^k$, then $U_k \ge F_n((1 + \varepsilon)^k) \ge 1 - (1 + \varepsilon)^{-2k}$, where the last equality follows from Chebyshev's inequality and the fact that $F_n$ has unit variance. Thus,
\begin{align*}
		\P\left( \exists n: \xi_{k, n} \ge (1 + \varepsilon)^k \right) &\le \P\left( U_k \ge  1 - (1 + \varepsilon)^{-2k}  \right) \le (1 + \varepsilon)^{-2k}. 
\end{align*}
We obtain the same bound for $\P\left( \exists n: \xi_{k, n} \le - (1 + \varepsilon)^k \right)$. Thus,
\begin{align*}
		\P\left( \sup_n |\xi_{k, n}| \ge (1 + \varepsilon)^k  \right) &\le 2( 1+ \varepsilon)^{-2k},  \\ 
		\sum_{k \ge 0 }\P\left( \sup_n |\xi_{k, n}| \ge (1 + \varepsilon)^k  \right) &\le 2 \sum_{k \ge 0}( 1+ \varepsilon)^{-2k} < \infty.
	\end{align*}
	By Borel-Cantelli lemma, a.s. there are finitely many $k$ such that $\sup_n |\xi_{k, n}| \ge (1 + \varepsilon)^k$. Pick $\varepsilon > 0$ sufficiently small such that $R (1 + \varepsilon) < 1$, then a.s. $\mathcal A$ happens.
	
	The remaining steps are identical to the proof of Lemma \ref{lem:ConvFarFrom01} as tightness is easily verified, and uniform integrability follows directly since all $\mu_n, \mu_\infty$ share the same parameters $\varepsilon_0, M_0$ in Lemma \ref{lem:momentsOfNumRoots}.
\end{proof}

For the region near $0$, we have the analogue of Lemma \ref{lm:ConvNear0}.
\begin{lemma}\label{lm:small}
	In the same setting as Lemma \ref{lem:ConvFarFrom01Cont}, for all $\varepsilon > 0$, there exists $\delta, N_0$ (possibly depending also on $\mu$) such that for all $n \ge N_0$, $$ 0\le \E N_{Q_n}((0, \delta)) < \varepsilon.$$
\end{lemma}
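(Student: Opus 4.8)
The plan is to mirror the proof of Lemma~\ref{lm:ConvNear0} almost line for line, with the random Taylor series $Q_n=\sum_{k\ge0}\xi_{k,n}x^k$ in place of $P_n$; the single spot where a genuinely new input is needed is the small-ball estimate near the origin, which must now hold \emph{uniformly in $n$} for all large $n$. Concretely, I would first reduce the lemma to the claim that for every $\ep>0$ there exist $\eta>0$ and $N_0$ with
\[
\mu_n\big(\{x:0<|x|\le \eta\}\big)<\ep\qquad\text{for all }n\ge N_0 .
\]
Granting this and shrinking $\eta$ so that also $\eta^2<\ep$, set $\delta:=\eta^2$ and rerun the argument of Section~\ref{sec:ConvNear0}. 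On $\{\tau_n\le n\}$, where $\tau_n:=\min\{k:\xi_{k,n}\neq0\}$, Jensen's inequality~\eqref{eq:Jensen} applied to the pair $B(0,\delta)\subset B(0,2\delta)$ is legitimate because $Q_n$ has radius of convergence at least $1$ almost surely (the event $\mathcal A$ from the proof of Lemma~\ref{lem:ConvFarFrom01Cont}); splitting on $\{|\xi_{\tau_n}|>\delta^{1/2}\}$ and using $\log(1+x)\le x$ reproduces~\eqref{eq:near0TauSmallDeltaGreat}, while on $\{|\xi_{\tau_n}|\le\delta^{1/2}\}$ one combines interlacing of real zeros, the independence of $\xi_{\tau_n}$ from $(Q_n^{\tau_n})'$, the uniform small-ball bound above, and the moment estimate of Lemma~\ref{lem:momentsOfNumRoots}—whose constant depends only on $\ep_0,M_0$ and is therefore uniform in $n$—to reproduce~\eqref{eq:near0TauSmallDeltaSmall}. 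The only extra bookkeeping is the factor $1/(1-\mu_n(0))$ produced by conditioning on $\tau_n$; since $\mu_n(0)\to\mu_\infty(0)<1$, this stays bounded for $n\ge N_0$ and is absorbed into the $O(\ep)$.

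To prove the uniform small-ball claim I would invoke the Portmanteau theorem for closed sets: for each fixed $\eta>0$, $\limsup_{n\to\infty}\mu_n([-\eta,\eta])\le\mu_\infty([-\eta,\eta])$. Subtracting the atoms at the origin and using the hypothesis $\mu_n(0)\to\mu_\infty(0)$,
\[
\limsup_{n\to\infty}\mu_n\big(\{x:0<|x|\le\eta\}\big)\;\le\;\mu_\infty([-\eta,\eta])-\mu_\infty(0)\;=\;\mu_\infty\big(\{x:0<|x|\le\eta\}\big),
\]
and the right-hand side tends to $0$ as $\eta\downarrow0$ by continuity from above, since the sets $\{x:0<|x|\le\eta\}$ decrease to $\emptyset$. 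Choosing $\eta$ small enough that this quantity is $<\ep/2$ (and $\eta^2<\ep$) makes the displayed $\limsup$ strictly less than $\ep/2$, so a suitable $N_0$ forces $\mu_n(\{x:0<|x|\le\eta\})<\ep$ for all $n\ge N_0$, which is the claim.

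The one genuine obstacle—the only step that does not transcribe mechanically from Section~\ref{sec:ConvNear0}—is ruling out an ``escape of mass toward the origin'': a priori the measures $\mu_n$ could place nonzero mass arbitrarily close to $0$ without placing it at $0$, in a way that defeats any single choice of $\delta$. The inequality above is precisely what excludes this, as the closed-set Portmanteau bound controls $\mu_n([-\eta,\eta])$ from above and the assumed convergence of the atom $\mu_n(0)\to\mu_\infty(0)$ lets one peel off the mass exactly at the origin, leaving a quantity that is uniformly small once $\eta$ is small. With that in hand, the remainder of the proof is the routine adaptation of Lemma~\ref{lm:ConvNear0} described above.
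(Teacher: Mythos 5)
Your proposal is correct and follows essentially the same route as the paper: reduce to a uniform-in-$n$ bound on $\mu_n(\{x: 0<|x|\le \delta\})$ and then rerun the proof of Lemma \ref{lm:ConvNear0}. The only cosmetic difference is that you obtain the small-ball claim via the closed-set Portmanteau inequality together with $\mu_n(0)\to\mu_\infty(0)$, whereas the paper picks $\delta$ to be a continuity point of $\mu_\infty$ and uses convergence of the distribution functions directly; both are equivalent.
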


\begin{proof}
	Tracing the proof of Lemma \ref{lm:ConvNear0}, it suffices to show that for all $\varepsilon > 0$, there exists $\delta, N_0$ such that for all $n \ge N_0$, with $\xi_n \sim \mu_n$,
	$$ \P\left( |\xi_n| \le \delta, \xi_n \neq 0 \right) < \varepsilon.$$ 
Since $\mu_\infty(0)<1$, there exists $\delta_0>0$ such that for $\xi_{\infty}\sim \mu_\infty$ and for all $\delta<\delta_0$, 
$$ \P\left( |\xi_\infty| \le \delta, \xi_\infty \neq 0 \right) \le  \P\left( |\xi_\infty| \le \delta_0, \xi_\infty \neq 0 \right) < \varepsilon/2.$$
Let $\delta<\delta_0$ be such that $\mu_\infty(\pm \delta) = 0$.  By convergence of $\mu_n$ to $\mu_\infty$, it holds that
$$\P(-\delta<\xi_n<\delta)\to \P(-\delta<\xi_n<\delta).$$
Since $\mu_n(0)\to \mu_\infty(0)$, this yields
$$ \P\left( |\xi_n| \le \delta, \xi_n \neq 0 \right) \to \P\left( |\xi_\infty| \le \delta, \xi_\infty \neq 0 \right)  < \varepsilon/2.$$
Therefore, for sufficiently large $n$, 
 $$ \P\left( |\xi_n| \le \delta, \xi_n \neq 0 \right) \le \ep$$
 as desired.
\end{proof}

\section{Proof of Lemma \ref{lm:nonUniversality}}\label{prof:nonuni}
 
Let $\xi$ be a random variable with positive mass at 0, which we denote by $p$.
Let $\ep = 1/2$ (say). From \eqref{eq:uni}, there exists a large constant $C$ such that for all sufficiently large $n$, we know that the contribution near 1 captures the leading order term, more specifically, 
\begin{equation}
 \E N_{P_n}([1-C^{-1}, 1]) \ge  \frac{1}{2\pi}\log\frac{n}{C}+ C_{\mathrm{Gau}, [0, 1]}- \frac{1}{2\pi}\log 2 -\ep. \notag
\end{equation}
From \eqref{eq:0}, we get the contribution from $0$ is significant
\begin{equation} 
	\E N_{P_n}(0) \ge \dfrac{p}{1 - p} -\ep.\notag
\end{equation}
Altogether, we have
$$ \E N_{P_n}([0, 1]) \ge  \dfrac{\log n}{2\pi} + \dfrac{p}{1 - p} -\dfrac{\log C}{2\pi} + C_{\mathrm{Gau}, [0, 1]}- \frac{1}{2\pi}\log 2-2\ep.$$
As $p$ gets arbitrarily close to $1$, the term $\dfrac{p}{1 - p}$ dominates the other constants, showing that the final constant term $C_{\xi, [0, 1]}$ can be arbitrarily large. Using the same argument as in beginning of the main proof, this can be extended to the whole real line, completing the proof.
 
\section{An alternative argument for symmetric distribution on $\{-1, 0, 1\}$}\label{sec:shortTaylor}
Of independent interest, we provide a short, alternative argument for showing that the probability that $P_{\infty}$ has double root is 0 (as stated in Lemma \ref{lm:dbr}) in the special case where $\xi$ has symmetric distribution in $\{-1, 0, 1\}$ with mass $\P(\xi = 0) = q>0$. In particular, we show the following.
\begin{theorem}
Almost surely, $P_{\infty}$ does not have a double root in $(0, 1)$.
\end{theorem}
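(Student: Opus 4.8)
\emph{Plan of proof.} The strategy is to reduce the claim to a fixed compact subinterval, normalize the series, and then exploit that the coefficients lie in $\{-1,0,1\}\subset\Z$ through their truncations.

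\emph{Reductions.} Since $(0,1)=\bigcup_{m\ge 3}[1/m,\,1-1/m]$, a countable union of null events shows it suffices to prove: for every fixed $0<r<R<1$, almost surely $P_\infty$ has no double root in $[r,R]$. Put $\tau:=\min\{k:\xi_k\ne 0\}$, which is finite almost surely because $q<1$. Then $P_\infty(z)=z^\tau\widetilde P(z)$ with $\widetilde P(0)=\xi_\tau=\pm1$, and on $(0,1)$ the factor $z^\tau$ never vanishes, so $P_\infty$ and $\widetilde P$ have the same zeros there with the same multiplicities. Conditioning on $\tau$ and using the symmetry of the law of $\xi$ (replace $(\xi_k)_{k\ge\tau}$ by $(-\xi_k)_{k\ge\tau}$ if necessary, which does not change the zero set of $\widetilde P$), we may assume $\widetilde P(z)=1+\sum_{k\ge 1}\eta_kz^k$ with $\eta_k$ i.i.d.\ symmetric on $\{-1,0,1\}$. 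Finally, if $\widetilde P(x)=0$ with $x\in(0,1)$ then $1=\bigl|\sum_{k\ge1}\eta_kx^k\bigr|\le x/(1-x)$, hence $x\ge\tfrac12$; so we may assume $[r,R]\subseteq[\tfrac12,1)$.

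\emph{Truncation to integer polynomials.} Set $\widetilde P_n(z):=1+\sum_{k=1}^{n}\eta_kz^k\in\Z[z]$. If $x^*\in[r,R]$ is a double root of $\widetilde P$, then, using $|\eta_k|\le1$ and differentiating termwise, $|\widetilde P_n(x^*)|\le R^{\,n}/(1-R)$ and $|\widetilde P_n'(x^*)|\le C\,nR^{\,n}$; equivalently, by Hurwitz's theorem applied to $\widetilde P_n\to\widetilde P$ on a small disc about $x^*$, for all large $n$ the polynomial $\widetilde P_n$ has two zeros, counted with multiplicity, inside a small disc near $x^*$. Therefore
\[
\P\bigl(\widetilde P\text{ has a double root in }[r,R]\bigr)\ \le\ \liminf_{n\to\infty}\P(E_n),
\]
where $E_n$ is the event that some $x\in[r,R]$ satisfies $|\widetilde P_n(x)|\le R^{\,n}$ and $|\widetilde P_n'(x)|\le nR^{\,n}$ (so that a pair of zeros of $\widetilde P_n$ nearly coincides near $[r,R]$). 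It remains to prove $\P(E_n)\to 0$, and this is the one place where the arithmetic of the coefficients is genuinely used: on $E_n$, either $\widetilde P_n$ has a true repeated root — a low-probability event disposed of directly, since a repeated rational root is impossible for a primitive $\{-1,0,1\}$-polynomial, a repeated irrational root of minimal polynomial $m$ forces $m^2\mid\widetilde P_n$, a divisibility constraint on $(\eta_1,\dots,\eta_n)$ of probability $o(1)$, and $\widetilde P_n(\pm1)=0$ is excluded by the anti-concentration bound of Lemma~\ref{lem:ess} — or $\operatorname{disc}(\widetilde P_n)$ is a nonzero integer, so $|\operatorname{disc}(\widetilde P_n)|\ge1$, which must be shown to be incompatible with the near-coincidence recorded by $E_n$.

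\emph{Main obstacle.} I expect this last point to be the crux. The subtlety is a mismatch of scales: truncation only detects the near-collision of two roots at the exponential accuracy $R^{\,n}$, whereas the separation guaranteed by $|\operatorname{disc}(\widetilde P_n)|\ge 1$ together with the crude Mahler-measure estimate $M(\widetilde P_n)\le\sqrt{n+1}$ can be far smaller, so one cannot simply conclude $\operatorname{disc}(\widetilde P_n)=0$. Closing this gap requires either refining the truncation depth (comparing $\widetilde P$ with several truncations at once) or a direct probabilistic estimate — for instance a bound on $\P\bigl(0<|\operatorname{disc}(\widetilde P_n)|\le\delta_n\bigr)$, equivalently on the probability that $\widetilde P_n$ has two abnormally close roots in a fixed real subinterval — obtained by combining the integrality of the discriminant with anti-concentration of $(\widetilde P_n(x),\widetilde P_n'(x))$. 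This is a real-variable, $\{-1,0,1\}$-specialized instance of the no-double-root phenomenon established over all of $B(0,1)$ by Michelen--Yakir~\cite{MichelenYakir2025}, and it is exactly the step where the restriction to the interval $(0,1)$ and to bounded integer coefficients is expected to make the argument short.
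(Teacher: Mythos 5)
Your write-up is a plan rather than a proof: the decisive step, showing $\P(E_n)\to 0$, is exactly the point you flag as the ``main obstacle'' and leave open, and the integrality route you propose cannot close it as stated. A nonzero integer discriminant together with the Mahler-measure bound $M(\widetilde P_n)\le \sqrt{n+1}$ only forbids two roots of $\widetilde P_n$ from being closer than roughly $n^{-O(n)}$, which is far smaller than the accuracy $R^{\,n}$ at which the truncation records the near-collision coming from a genuine double root of $\widetilde P$; so there is no contradiction between $|\mathrm{disc}(\widetilde P_n)|\ge 1$ and the event $E_n$, and no decaying bound on $\P(E_n)$ follows. What you would actually need --- a bound on the probability that $\widetilde P_n$ has two roots within $e^{-cn}$ of each other in a fixed subinterval of $(0,1)$, equivalently on $\P\bigl(0<|\mathrm{disc}(\widetilde P_n)|\le \delta_n\bigr)$ --- is essentially the substance of the repulsion/no-double-root phenomenon of Michelen--Yakir that this special case is supposed to bypass, so the reduction does not produce a short self-contained argument. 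Several auxiliary claims are also asserted without proof (e.g.\ that a repeated algebraic root forces $m^2\mid \widetilde P_n$ with probability $o(1)$), but these are secondary to the unclosed crux.

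For comparison, the paper's proof uses the special structure of the law (atom at $0$ and sign symmetry) in a completely different, softer way: writing $\mathcal A$ for the event of a double root in $(0,R)$, one checks that conditioning on $\xi_0$ (and then on $\xi_0,\dots,\xi_n$) does not change $\P(\mathcal A)$ --- when $\xi_0=0$ the event reduces to the same event for the shifted sequence, and the cases $\xi_0=\pm 1$ are exchanged by the global sign flip --- so L\'evy's zero-one law forces $\P(\mathcal A)\in\{0,1\}$; the value $1$ is excluded because on the positive-probability event $\xi_0=\cdots=\xi_N=1$ the series has no zero at all in $(0,R)$. If you want to salvage your approach you would have to either import a quantitative two-point (repulsion) estimate for $(\widetilde P_n(x),\widetilde P_n'(x))$, or switch to an argument of the zero-one type above; as written, the proof has a genuine gap.
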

\begin{proof}
	Let $R\in (0, 1)$. It suffices to prove the statement for $(0, R)$.
Denote $$\mathcal A := \left\{{\bf \xi} = (\xi_0, \xi_1 \dots) \in \{-1, 0, 1\}^{\N}: \sum_{k \ge 0} \xi_k x^k \text{ has a double root in } (0, R)\right\}.$$
We will prove that $\P(\mathcal A) =0$ via L\'{e}vy's zero-one law. First,
	\begin{align*}
		\P({\bf \xi} \in \mathcal A, \xi_0=0) &= q\P\left(\sum_{k \ge 1} \xi_k z^k \text{ has a double root in } (0, R) \right)= q\P(\mathcal A)
	\end{align*}
	since $(\xi_1, \xi_2, \dots)$ has the same distribution as $(\xi_0, \xi_1, \dots)$.
	By symmetry, $\P(\mathcal A_1\vert \xi_0=1) = \P(\mathcal A_{-1}\vert \xi_0=1)$. 
	Since $ \mathcal A_{-1}, \mathcal A_0, \mathcal A_1$ forms a partition of $\mathcal A$, it follows that 
	$$\P(\xi\in \mathcal A\vert \xi_0=\pm 1) =  \frac{1}{1-q} \P(\xi\in \mathcal A, \xi_0=\pm 1) = \frac{1-q}{1-q} \P(\xi\in \mathcal A)=\P(\mathcal A).$$
	Therefore,conditioning on $\xi_0$ does not change the probability of $P_{\infty}$ getting a double root in $(0, R)$. Extending the above, conditioning on $\xi_0, \dots, \xi_n$ does not change the probability of $\mathcal A$.
	
	Denote $\mathcal F_n := \sigma\left( \{\xi_k\}_{k = 0}^n \right)$, then $\mathcal F_0 \subseteq \mathcal F_1 \subseteq \dots \subseteq \mathcal F_{\infty}$. We then have, as $n\to\infty$,
	$$ \E 1_{\mathcal A} = \E (1_{\mathcal A} \mid \mathcal F_n) \xrightarrow[]{a.s.} \E(1_{\mathcal A} \mid \mathcal F_{\infty}) = 1_{\mathcal A},$$
	the first equality is due to our observation of conditioning on the first $n$ coefficients, the convergence is due to L\'{e}vy's zero-one law, and the last equality holds since $\mathcal A$ is $\mathcal F_{\infty}$-measurable. Thus, $\P(\mathcal A) \in \{0, 1\}$. To see that this probability cannot be $1$, note that there exists $N$ sufficiently large such that when all the first $N$ entries are 1, namely $\xi_0 = \xi_1 = \dots = \xi_N = 1$, then for all $x \in (0, R)$, 
	\begin{align*}
	|P_\infty(x)|\ge	\sum_{k = 0}^N x^k - \left| \sum_{k > N} \xi_kx^k \right| &\ge \dfrac{1 - x^{N+1}}{1 - x} - \sum_{k >N} x^k = \dfrac{1 - 2x^{N+1}}{1 - x} > 0.
	\end{align*}
	Thus, with positive probability $P_{\infty}$ does \textit{not} have a root in $(0, R)$, i.e. $\P(\mathcal A) < 1$. This completes our proof.
\end{proof}

\bibliographystyle{alpha}
\bibliography{references}

\end{document}